\documentclass[reqno]{amsart}

\usepackage{alphalph}


\usepackage{stmaryrd}
\usepackage{amsmath, amssymb, amsthm, epsfig}
\usepackage{hyperref, latexsym}
\usepackage{url}
\usepackage[mathscr]{euscript}

\usepackage{color}
\usepackage{fullpage} 
\usepackage{setspace}

\onehalfspacing

\def\today{\ifcase\month\or
  January\or February\or March\or April\or May\or June\or
  July\or August\or September\or October\or November\or December\fi
  \space\number\day, \number\year}

 \newtheorem{theorem}{Theorem}
  
 \newtheorem{lemma}[theorem]{Lemma}
 \newtheorem{proposition}[theorem]{Proposition}
 \newtheorem{corollary}[theorem]{Corollary}
 \theoremstyle{definition}

 \theoremstyle{remark}

 \newcommand{\C}{\mathbb{C}}
 \newcommand{\R}{\mathbb{R}}

 \newcommand{\hh}{\tfrac12}

  \renewcommand{\d}{\text{\rm d}}

\newcommand{\im}{{\rm Im}\,}
\newcommand{\re}{{\rm Re}\,}

\begin{document}
\title[The second moment of $S_n(t)$ \\ on the Riemann hypothesis]{The second moment of $S_n(t)$ \\on the Riemann hypothesis}
\author[Chirre]{Andr\'{e}s Chirre}
\author[Quesada-Herrera]{Oscar E. Quesada-Herrera}
\subjclass[2010]{11M06, 11M26}
\keywords{Riemann zeta-function, Riemann hypothesis, argument}
\address{Department of Mathematical Sciences, Norwegian University of Science and Technology, NO-7491 Trondheim, Norway}
\email{carlos.a.c.chavez@ntnu.no }
\address{IMPA - Instituto Nacional de Matem\'{a}tica Pura e Aplicada - Estrada Dona Castorina, 110, Rio de Janeiro, RJ, Brazil 22460-320}
\email{oscarqh@impa.br}

\allowdisplaybreaks
\numberwithin{equation}{section}

\maketitle

\begin{abstract}
	Let $S(t) = \tfrac{1}{\pi} \arg \zeta \big(\hh + it \big)$ be the argument of the Riemann zeta-function at the point $\tfrac12 + it$. For $n \geq 1$ and $t>0$ define its antiderivatives as
	\begin{equation*}
		S_n(t) = \int_0^t S_{n-1}(\tau) \,\d\tau\, + \delta_n\,, 
	\end{equation*}
	where $\delta_n$ is a specific constant depending on $n$ and $S_0(t) := S(t)$. In 1925, J. E. Littlewood proved, under the Riemann Hypothesis, that 
		$$	\int_{0}^{T}|S_n(t)|^2\,\d t = O(T),
$$
	for $n\geq 1$. In 1946, Selberg unconditionally established the explicit asymptotic formulas for the second moments of $S(t)$ and $S_1(t)$. This was extended by Fujii for $S_n(t)$, when $n\geq 2$. Assuming the Riemann Hypothesis, we give the explicit asymptotic formula for the second moment of $S_n(t)$ up to the second-order term, for $n\geq 1$. Our result conditionally refines Selberg's and Fujii's formulas and extends previous work by Goldston in $1987$, where the case $n=0$ was considered.
\end{abstract}

\section{Introduction}
\subsection{Background} Let $\zeta(s)$ be the Riemann zeta-function. Let $N(t)$ denote the number of zeros $\rho = \beta + i \gamma$ of $\zeta(s)$ (counted with multiplicity), such that $0 < \gamma \leq t$, where the zeros with ordinate $\gamma = t$ are counted with weight $\hh$. The classical Riemann von-Mangol\d t formula states that 
\begin{align} \label{1_1}
N(t) = \frac{t}{2\pi} \log \frac{t}{2\pi} - \frac{t}{2\pi} + \frac{7}{8}  + S(t) + O\left( \frac{1}{t}\right),
\end{align}
where $S(t)$ is known as the argument function of the Riemann zeta-function. When $t$ is not the ordinate of a zero of $\zeta(s)$, we define
$$S(t) = \tfrac{1}{\pi} \arg \zeta \big(\hh + it \big),$$
where the argument is obtained by a continuous variation along straight line segments joining the points $2$, $2+i t$ and $\hh + it$, with the convention that $\arg \zeta(2) = 0$. If $t$ is the ordinate of a zero of $\zeta(s)$, we define
$$S(t) = \tfrac{1}{2}\, \lim_{\varepsilon \to 0} \left\{ S(t + \varepsilon) + S(t - \varepsilon)\right\}.$$
To understand the distribution of the zeros of $\zeta(s)$, 
the formula \eqref{1_1} has led to studying the oscillatory character of $S(t)$. J. E. Littlewood \cite{L, L2} and A. Selberg \cite{S1, S2} investigated the behavior and the power of the cancellation in $S(t)$ using its antiderivatives $S_n(t)$. Setting $S_0(t) = S(t)$ we define, for $n\geq 1$ an integer and $t >0$,
\begin{equation*}
S_n(t) = \int_0^t S_{n-1}(\tau) \,\d\tau\, + \delta_n\,,
\end{equation*}
where $\delta_n$ is a specific constant depending on $n$. These are given by
$$\delta_{2k-1} =\frac{ (-1)^{k-1}}{\pi} \int_{1/2}^{\infty} \int_{\sigma_{2k-2}}^{\infty} \ldots \int_{\sigma_{2}}^{\infty} \int_{\sigma_{1}}^{\infty} \log |\zeta(\sigma_0)|\, \d\sigma_0\,\d\sigma_1\,\ldots \,\d \sigma_{2k-2} $$
for $n = 2k-1$, with $k\geq 1$, and
$$\delta_{2k} = (-1)^{k-1} \int_{1/2}^{1} \int_{\sigma_{2k-1}}^{1} \ldots \int_{\sigma_{2}}^{1} \int_{\sigma_{1}}^{1} \d\sigma_0\,\d\sigma_1\,\ldots \,\d \sigma_{2k-1}  = \frac{(-1)^{k-1}}{(2k)! \cdot 2^{2k}}$$
for $n = 2k$, with $k\geq 1$.

\smallskip

Let us recall some estimates for $S_n(t)$. In $1924$, assuming the Riemann Hypothesis (RH), Littlewood \cite[Theorem 11]{L} established the bounds 
\begin{align} \label{1_12}
S_n(t)=O_n\bigg(\dfrac{\log t}{(\log\log t)^{n+1}}\bigg),
\end{align} 
for $n\geq0$. The order of magnitude in \eqref{1_12} has never been improved, and efforts have thus been concentrated in optimizing the values of the implicit constants. The best known versions of these results are due to Carneiro, Chandee and Milinovich \cite{CCM} for $n=0$ and $n=1$, and Carneiro and Chirre \cite{CChi} for $n \geq 2$ (see also \cite[Theorem 2]{CChiM} for a refinement in the error term). In the other direction, Selberg \cite{S1} and Littlewood \cite{L2} first studied the largest positive and negative values of $S_n(t)$. These have also been the subject of recent research, with improvements for $S(t)$ and $S_1(t)$ in the work of Bondarenko and Seip \cite{BS}. Further refinements for $S_n(t)$ were obtained by Chirre and Mahatab \cite{ChiM2} (see also \cite{Chi} and \cite{ChiM}).
\subsection{The second moment for $S_n(t)$} The next step to understand the behavior of the function $S_n(t)$ is to obtain an asymptotic formula for its moments. In this paper we will concentrate on the second moment.

 In $1925$, assuming RH, Littlewood \cite[Theorem 9]{L2} proved for $n\geq 1$ that 
\begin{align} \label{3}  
\int_{0}^{T}|S_n(t)|^2\,\d t = O(T).
\end{align}
A few years later, in $1928$, Titchmarsh \cite[Theorem II]{Tit2} gave the first explicit version of the above result, for $n=1$, establishing that
\begin{align*} 
\int_0^T |S_1(t)|^2 \d t \sim \dfrac{C_1}{2\pi^2}\,T,
\end{align*}
where 
$$
C_1=\sum_{m=2}^\infty \frac{\Lambda^2(m)}{m\left(\log m\right)^{4}}.
$$
Here, $\Lambda(m)$ is the von-Mangol\d t function, which is defined to be $\log p$ if $m=p^k$ (for some prime number $p$ and integer $k\geq 1$), and zero otherwise. Unconditionally, in $1946$ Selberg \cite[Theorems 6 and 7]{S2} established that
\begin{align}  \label{S}
\int_{0}^{T}|S(t)|^2\,\d t = \dfrac{T}{2\pi^2}\,\log\log T + O(T\sqrt{\log\log T}),
\end{align} 
and\footnote{\,\,\, In \cite{S2}, Selberg actually calculated the second moment for the function $S_1(t)-\delta_1$. His formula can be used to deduce \eqref{S1}, by using the unconditional estimate for $S_2(t)$ given by Fujii \cite[Theorem 2]{F1}.}
\begin{align}  \label{S1}
\int_{0}^{T}|S_1(t)|^2\,\d t =\dfrac{C_1}{2\pi^2}\,T + O\bigg(\dfrac{T}{\log T}
\bigg).
\end{align}
 Assuming RH, Selberg \cite{S1} had proved \eqref{S} with the error term $O(T)$. Going even further, he computed all even moments for $S(t)$ and $S_1(t)$. Using these  even moments for $S(t)$, Ghosh \cite{Gh1, Gh2} obtained the asymptotic behavior for all moments of $|S(t)|^\lambda$, with $\lambda>-1$. Ghosh then used this to establish that $|S(t)|$ is normally distributed around its average order $(\log\log T)^{1/2}$. Furthermore, Fujii \cite{fujii} established, assuming RH,
 \begin{align}  \label{Sn22222}
 	\int_{0}^{T}|S_n(t)|^2\,\d t =\dfrac{C_n}{2\pi^2}\,T + O\bigg(\dfrac{T}{\log T}
 	\bigg),
 \end{align}
for $n\geq2$, where $C_n$ is defined in \eqref{1_00}.
 
 On the other hand, Goldston refined \eqref{S}, assuming RH and using the techniques developed by Montgomery \cite{M} in his work on the pair correlation of the zeros of the Riemann zeta-function. 
We define
\begin{align}  \label{1_4}
F(\alpha)=F(\alpha,T)=\bigg(\dfrac{T}{2\pi}\log T\bigg)^{-1}\displaystyle\sum_{0<\gamma, \gamma'\leq T}T^{\,i\alpha(\gamma-\gamma')}w(\gamma-\gamma'),
\end{align}
for $\alpha\in \R$ and $T\geq 2$, where $w(u)=4/(4+u^2)$. Then, Goldston \cite[Theorem 1]{G1} showed that
	\begin{equation} \label{0_3}
\int_0^T |S(t)|^2 \d t = \frac{T}{2\pi^2}\log\log T + \frac{T}{2\pi^2}\left[\int_1^\infty \frac{F(\alpha)}{\alpha^{2}}\d \alpha+\gamma_0-\sum_{m=2}^\infty\sum_{p}\left(\dfrac{1}{m}-\dfrac{1}{m^2}\right)\dfrac{1}{p^m}\right] + o(T),
\end{equation}
as $T\to \infty$, where $\gamma_0$ is Euler's constant. The error term $o(T)$ in \eqref{0_3} was refined by Chan \cite{Chan1, Chan2}, assuming a quantitative form of the Twin Prime Conjecture and a strong asymptotic formula on pair correlation of zeros of the Riemann zeta function due to Bogomolny and Keating \cite{BK}, respectively.

\medskip

Our main result in this paper is to establish an explicit version of \eqref{3} up to the second-order term, extending the result of Goldston \eqref{0_3} for the cases $n\geq 1$. In particular, we obtain refinements of \eqref{S1} and \eqref{Sn22222}, under RH. Note that our second-order term improves the error terms in \eqref{S1} and \eqref{Sn22222}.

\smallskip

\begin{theorem}\label{thm:main}
	Let $n\ge 1$ be an integer. Assume the Riemann Hypothesis. Then 
	\begin{equation*} 
	\int_0^T |S_n(t)|^2\d t = \dfrac{C_n}{2\pi^2}\, T + \frac{T}{2\pi^2\left(\log T\right)^{2n}}\left[\int_1^\infty \frac{F(\alpha)}{\alpha^{2n+2}}\d \alpha -\frac{1}{2n}\right] + O\bigg(\frac{T\sqrt{\log\log T}}{(\log T)^{2n+1/2}}\bigg),
	\end{equation*}
as $T\to\infty$, where
\begin{equation} \label{1_00}
C_n=\sum_{m=2}^\infty \frac{\Lambda^2(m)}{m\left(\log m\right)^{2n+2}}.
\end{equation}
\end{theorem}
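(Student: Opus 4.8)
The plan is to start from an explicit formula representing $S_n(t)$, under RH, as a sum over the nontrivial zeros. Using the Cauchy--Riemann equations in the half-strip (Littlewood's device, which underlies the very definition of the constants $\delta_n$), one has for $n\ge 1$ a representation of the shape
\begin{equation*}
\pi\,S_n(t)=(-1)^{\lfloor (n+1)/2\rfloor}\,\frac{1}{(n-1)!}\int_{1/2}^{\infty}\Big(\sigma-\tfrac12\Big)^{n-1}\,G_n(\sigma+it)\,\d\sigma+(\text{smooth terms}),
\end{equation*}
where $G_n$ is $\log|\zeta|$ for one parity of $n$ and $\arg\zeta$ for the other. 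Feeding in the Dirichlet expansion $\log\zeta(s)=\sum_m \Lambda(m)\,m^{-s}/\log m$ and evaluating the $\sigma$-integral produces the coefficient $\Lambda(m)\,m^{-1/2}(\log m)^{-(n+1)}$, which already flags the diagonal constant $C_n$ in \eqref{1_00}. Since this series does not converge on the critical line, I would instead substitute the Hadamard/explicit formula for $\log\zeta$, writing the $\sigma$-integral as a convergent sum over zeros $\gamma$ against an explicit kernel $K_n(t-\gamma)$, plus controlled archimedean terms.

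Next I would compute $\int_0^T|S_n(t)|^2\,\dt$ by inserting this representation and expanding the square, producing a double sum $\sum_{0<\gamma,\gamma'\le T}$ against a kernel depending on $\gamma-\gamma'$. Passing to the Fourier/Mellin side, this double sum is precisely Montgomery's $F(\alpha)$ from \eqref{1_4} integrated against a weight $g_n(\alpha)$. The crucial bookkeeping point is that each of the $n$ integrations defining $S_n$ divides the relevant transform by a frequency of size $\asymp\alpha\log T$ (equivalently, the $\sigma$-integral contributes the Mellin factor $(n-1)!/(\log m)^{n}$); squaring, the weight for $S_n$ is that of Goldston's case $n=0$ times $(\alpha\log T)^{-2n}$, so $g_n(\alpha)\asymp (\log T)^{-2n}\,\alpha^{-(2n+2)}$ for $\alpha$ bounded below. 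This explains both the prefactor $(\log T)^{-2n}$ and the weight $\alpha^{-(2n+2)}$ in the statement, and shows that the single integral $\tfrac{T}{2\pi^2}(\log T)^{-2n}\int_0^\infty F(\alpha)\,g_n(\alpha)\,\dalpha$ ought to encode both the main term and the second-order term, according to the range of $\alpha$.

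I would then evaluate this integral by splitting at $\alpha=1$. On $[1,\infty)$ the weight is $\asymp\alpha^{-(2n+2)}$ and one needs only the convergence of $\int_1^\infty F(\alpha)\,\alpha^{-(2n+2)}\,\dalpha$, which follows from the standard growth bounds for $F(\alpha)$ under RH together with $n\ge1$; this region contributes the term $\int_1^\infty F(\alpha)\,\alpha^{-(2n+2)}\,\dalpha$ untouched (note we never need to know $F(\alpha)$ precisely for large $\alpha$). On $[0,1]$, and especially near the scale $\alpha\asymp(\log T)^{-1}$ where the weight is largest, I would invoke Montgomery's evaluation of $F(\alpha)$ through his prime-side explicit formula; resummed via Mertens-type/partial-summation estimates (the convergence coming from the factor $(\log m)^{-(2n+2)}$), this region supplies the $T$-sized main term $\tfrac{C_n}{2\pi^2}T$. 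Comparing this precise small-$\alpha$ evaluation with the leading model $F(\alpha)\approx\alpha$, the model over-counts on $[1,\infty)$ by $\int_1^\infty\alpha\cdot\alpha^{-(2n+2)}\,\dalpha=\tfrac{1}{2n}$, which is exactly the constant $-\tfrac1{2n}$ subtracted in the statement. This is consistent with Goldston's \eqref{0_3}, where $\tfrac1{2n}$ degenerates ($n=0$) and the small-$\alpha$ prime side produces instead $\log\log T$, $\gamma_0$, and a prime sum.

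The main obstacle will be the uniform analysis of the transition region $\alpha\asymp(\log T)^{-1}$ and the extraction of the second-order term with the sharp error $O\big(T\sqrt{\log\log T}\,(\log T)^{-(2n+1/2)}\big)$: this requires a quantitatively precise form of Montgomery's formula (or Goldston's lemmas) rather than the bare asymptotic $F(\alpha)\sim\alpha$, a careful truncation of both the prime and the zero sums at heights matched to the target error, and a bound for the second moment of the resulting remainder, with the factor $\sqrt{\log\log T}$ being the Selberg-type fluctuation inherited from the prime side and here attenuated by $(\log T)^{-(2n+1/2)}$. A secondary technical point is the parity-dependence of the first step (alternating between $\log|\zeta|$ and $\arg\zeta$) and the verification that the smooth/archimedean terms and the tails of the zero sum all contribute below the stated error; these I expect to be routine but lengthy. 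Finally, I would verify the constant $C_n$ and the normalization $1/(2\pi^2)$ by specializing to $n=1$ and matching Titchmarsh's and Selberg's formula \eqref{S1}.
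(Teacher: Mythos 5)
Your zeros-side mechanism is essentially right (the double zero sum against a kernel whose transform behaves like $(\beta/\alpha)^{2n+2}$, the prefactor $(\log T)^{-2n}$, and the heuristic that the model $F(\alpha)\approx\alpha$ over-counts by $\int_1^\infty \alpha^{-2n-1}\,\d\alpha = \tfrac{1}{2n}$ all match the paper's Proposition \ref{lem:R}), but there is a genuine gap in how you propose to produce the main term $\tfrac{C_n}{2\pi^2}T$. You want to extract it from the small-$\alpha$ region of a single integral $\int_0^\infty F(\alpha)\,g_n(\alpha)\,\d\alpha$ with $g_n(\alpha)\asymp(\log T)^{-2n}\alpha^{-(2n+2)}$, by ``invoking Montgomery's evaluation of $F(\alpha)$ through his prime-side explicit formula.'' This fails on two counts. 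First, the weight $\alpha^{-(2n+2)}$ is badly non-integrable at $\alpha=0$, so a pure zero-sum representation of $S_n(t)$ does not yield an absolutely convergent $F$-integral at all; and the only quantitative input available for $F$ on $[0,1]$ — the Goldston--Montgomery asymptotic $F(\alpha)=(\alpha+T^{-2\alpha}\log T)(1+o(1))$ with relative error $O\big(\sqrt{\log\log T/\log T}\big)$ — carries \emph{no arithmetic information}: the constant $C_n=\sum\Lambda^2(m)/m(\log m)^{2n+2}$ simply is not recoverable from it, and near $\alpha\asymp1/\log T$ the $o(1)$ errors, multiplied by your singular weight, dominate the size-$T$ term you are trying to isolate. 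The paper (following Goldston) avoids exactly this: Montgomery's explicit formula is used to write $S_n(t)$ as a zero sum \emph{plus} a truncated prime Dirichlet polynomial $\sum_{m\le x}\im\{i^n m^{-it}\}\Lambda(m)m^{-1/2}(\log m)^{-n-1}f_n(\log m/\log x)$ with a free parameter $x=T^\beta$. The subtraction regularizes the kernel (the paper's $k_n$ equals the bounded function $g_n^2(2\pi\xi)$ for $|\xi|\le\tfrac1{2\pi}$, with the $\xi^{-2n-2}$ decay only for $|\xi|\ge\tfrac1{2\pi}$), and the main term $C_n T$ then comes from the prime side: the mean square $G_n$ via a mean value theorem for Dirichlet polynomials, and the cross term $H_n$ via Titchmarsh's contour-integral evaluation of $\int_0^T\log\zeta(\sigma+it)\,m^{\pm it}\,\dt$ — with the decisive cancellation in $H_n-G_n$ via completing the square, $2f_n-f_n^2=1-(1-f_n)^2$, which is where the function $g_n(x)=(1-f_n(x))/x^{n+1}$ enters.

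Two further points you treat as routine are actually at the critical scale. The parity-dependent archimedean term $\mu_n(\log x)^{-n-1}\log\tfrac{t}{2\pi}$ (present for odd $n$) has second moment $\asymp T/(\log T)^{2n}$, i.e.\ exactly the size of the second-order term; it must cancel \emph{exactly} against the $2\mu_n^2\beta^{-2n-2}$ piece arising from $F(\alpha)$ on $[0,\beta]$, just as the $\beta$-dependent quantities $\big(A_n+\tfrac1{2n}\big)\beta^{-2n}$, with $A_n=\int_0^1\alpha\,g_n^2(\alpha)\,\d\alpha$, must cancel between the zeros side and the prime side to leave a $\beta$-independent answer — without tracking these your formula would depend on the truncation. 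Finally, the error $O\big(T\sqrt{\log\log T}/(\log T)^{2n+1/2}\big)$ is not a Selberg-type fluctuation ``inherited from the prime side'' as you suggest: it comes from the $o(1)$ in the Goldston--Montgomery formula for $F(\alpha)$ on $0\le\alpha\le1$, i.e.\ from the zeros side, while the prime-side lemmas contribute only power-saving errors $O(x^2\log T)=O(T^{2\beta}\log T)$.
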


\medskip

Let us analyze the constants that appear on Theorem \ref{thm:main}. We highlight that $C_n\to \infty$ when $n\to \infty$. In fact, the growth of these constants is exponential (see Section \ref{sec:num}), of order $$C_n\sim \frac{1}{2(\log 2)^{2n}}.$$
Table $1$ puts in perspective the constant that appears in front of the first-order term, in the small cases $1\leq n \leq 10$. For the second-order term, by following Goldston's argument using \cite[Lemma A]{G1}, it is straightforward to obtain upper and lower bounds for the integral in the second-order term of Theorem \ref{thm:main}. For any $n\ge 1$ we get\footnote{\,\,\, The constants in \eqref{3_12_9_6} may be slightly improved. However, this is far from the expected behavior suggested by  the Strong Pair Correlation Conjecture \cite{M}, and it seems difficult to obtain anything qualitatively closer.}
	\begin{align} \label{3_12_9_6}
	\frac{2}{3^{2n+3}}-\varepsilon \le \int_1^\infty \frac{F(\alpha)}{\alpha^{2n+2}} \d \alpha \le \frac{8}{3}\,\zeta(2n+2) +\varepsilon,
	\end{align}
	for any $\varepsilon>0$ and $T$ sufficiently large. This implies that the second-order term in Theorem \ref{thm:main} has the growth $T/(\log T)^{2n}$. We highlight that this term has a decreasing order of magnitude as $n$ grows. Furthermore, Montgomery has conjectured that for any fixed number $M>1$, we have $F(\alpha, T)=1+o(1)$ as $T\to\infty$, uniformly for $1\leq \alpha\leq M$. This is known as the Strong Pair Correlation Conjecture. This implies that, as $T\to\infty$
$$
\int_1^\infty \frac{F(\alpha)}{\alpha^{2n+2}}\d \alpha= \dfrac{1}{2n+1} + o(1).
$$

\begin{corollary}
	Let $n\ge 1$ be an integer. Assume the Riemann Hypothesis and the Strong Pair Correlation Conjecture. Then 
	\begin{equation*} 
	\int_0^T |S_n(t)|^2\d t = \dfrac{C_n}{2\pi^2}\, T - \frac{T}{4n(2n+1)\pi^2\left(\log T\right)^{2n}}+o\bigg(\frac{T}{\left(\log T\right)^{2n}}\bigg),
	\end{equation*}
	as $T\to\infty$, where $C_n$ was defined in \eqref{1_00}.
\end{corollary}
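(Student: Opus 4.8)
The plan is to deduce the Corollary directly from Theorem~\ref{thm:main}: the theorem already supplies the complete second-order expansion with the oscillatory integral $\int_1^\infty F(\alpha)/\alpha^{2n+2}\,\d\alpha$ left explicit, so the only substantive task is to evaluate this integral under the Strong Pair Correlation Conjecture and then collect the error terms.

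First I would evaluate the integral. Fix a parameter $M>1$ and split
\begin{equation*}
\int_1^\infty \frac{F(\alpha)}{\alpha^{2n+2}}\,\d\alpha = \int_1^M \frac{F(\alpha)}{\alpha^{2n+2}}\,\d\alpha + \int_M^\infty \frac{F(\alpha)}{\alpha^{2n+2}}\,\d\alpha.
\end{equation*}
On the compact range $[1,M]$ the Strong Pair Correlation Conjecture gives $F(\alpha,T)=1+o(1)$ uniformly in $\alpha$ as $T\to\infty$; hence, integrating the uniform error,
\begin{equation*}
\int_1^M \frac{F(\alpha)}{\alpha^{2n+2}}\,\d\alpha = \int_1^M \frac{\d\alpha}{\alpha^{2n+2}} + o(1) = \frac{1}{2n+1}\Big(1-M^{-(2n+1)}\Big)+o(1),
\end{equation*}
where the $o(1)$ depends on the fixed $M$.

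The one delicate point---and the only place I expect any real work---is the interchange of the limits $T\to\infty$ and $M\to\infty$, since the error above degenerates as $M$ grows. I would control the tail uniformly in $T$. Because $F\ge 0$, the tail integral is nonnegative; on the other hand, the $T$-uniform upper bound for $F(\alpha)$ underlying the upper estimate in \eqref{3_12_9_6} (Goldston's Lemma~A) grows at most polynomially in $\alpha$, so against the weight $\alpha^{-(2n+2)}$ it yields $\int_M^\infty F(\alpha)/\alpha^{2n+2}\,\d\alpha \le E(M)$ with $E(M)$ independent of $T$ and $E(M)\to 0$ as $M\to\infty$. Consequently, for each fixed $M$ the integral $\int_1^\infty F(\alpha)/\alpha^{2n+2}\,\d\alpha$ has, as $T\to\infty$, liminf at least $\tfrac{1}{2n+1}\big(1-M^{-(2n+1)}\big)$ and limsup at most $\tfrac{1}{2n+1}\big(1-M^{-(2n+1)}\big)+E(M)$. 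Letting $M\to\infty$ squeezes these bounds together and gives
\begin{equation*}
\int_1^\infty \frac{F(\alpha)}{\alpha^{2n+2}}\,\d\alpha = \frac{1}{2n+1}+o(1), \qquad T\to\infty.
\end{equation*}

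Finally I would substitute this into Theorem~\ref{thm:main}. The bracketed factor becomes $\tfrac{1}{2n+1}-\tfrac{1}{2n}+o(1)=-\tfrac{1}{2n(2n+1)}+o(1)$, so the second-order term equals
\begin{equation*}
\frac{T}{2\pi^2(\log T)^{2n}}\Big[-\frac{1}{2n(2n+1)}+o(1)\Big] = -\frac{T}{4n(2n+1)\pi^2(\log T)^{2n}}+o\Big(\frac{T}{(\log T)^{2n}}\Big).
\end{equation*}
The error term $O\big(T\sqrt{\log\log T}\,/(\log T)^{2n+1/2}\big)$ of Theorem~\ref{thm:main} is itself $o\big(T/(\log T)^{2n}\big)$, since $\sqrt{\log\log T}/\sqrt{\log T}\to 0$, and is thus absorbed. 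Adding the unchanged leading term $\tfrac{C_n}{2\pi^2}T$ then reproduces the claimed expansion, completing the proof.
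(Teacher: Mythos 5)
Your proposal is correct and follows essentially the same route as the paper, which likewise deduces the corollary by substituting $\int_1^\infty F(\alpha)\,\alpha^{-(2n+2)}\,\d\alpha = \tfrac{1}{2n+1}+o(1)$ (stated just before the corollary as a consequence of the Strong Pair Correlation Conjecture) into Theorem \ref{thm:main} and absorbing the $O\big(T\sqrt{\log\log T}/(\log T)^{2n+1/2}\big)$ error. The only difference is that you spell out the interchange of the limits $T\to\infty$ and $M\to\infty$ via a $T$-uniform tail bound from Goldston's Lemma A --- the same ingredient the paper invokes for \eqref{3_12_9_6} --- a detail the paper leaves implicit.
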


\begin{table}
	\begin{center}
		\begin{tabular}{|c|c|c|c|}
			\hline
			$n$ & $C_n/2\pi^2$ & 	$n$ &  $C_n/2\pi^2$  \\
			\hline \hline
			1 & 0.079290... &	6 & 2.064933...  \\ \hline
			2 & 0.124743... & 	7 & 4.290884...  \\ \hline
			3 & 0.239241...  & 8 & 8.925169...\\ \hline
			4 & 0.483838...& 9 & 18.571837... \\ \hline
			5 & 0.996243... & 10& 38.650937... \\ \hline
		\end{tabular}
		\vspace{0.2cm}
		\caption{Values for $1 \leq n \leq 10$.}
	\end{center}
\end{table}

\subsection{Outline of the proof} Our proof follows the ideas developed by Goldston in \cite{G1}, and involves additional technical challenges. In Section $2$, we start by obtaining a new representation formula for $S_n(t)$, for $n\geq 1$, associated with a suitable real-valued function $f_n$. For each $n\geq 1$ define the function $f_n:(0,2)\to\R$ as follows:
\begin{align} \label{6_05_13_53}
f_n(x)=\dfrac{x^{n+1}}{n!}\int_{0}^{\infty}\,y^{n}\dfrac{2\sinh\big(y(1-x)\big)}{(e^{y}+(-1)^{n+1}e^{-y})}\d y.
\end{align}
To get the desired formula for $S_n(t)$, we combine an explicit formula due to Montgomery \cite{M} with an expression for $S_n(t)$ implicit in the work of Fujii \cite{F1} (see also \cite[Lemma 2]{CChi}) that depends on the logarithmic derivative of $\zeta(s)$. Our formula relates $S_n(t)$ to a Dirichlet polynomial over primes involved with the function $f_n$, a sum over the zeros of the Riemann zeta-function, and a few extra terms that depend on the parity of $n$. By squaring and integrating, we obtain an expression for the second moment of $S_n(t)$. Using the asymptotic behavior of each term in this expression, we obtain Theorem \ref{thm:main}. These asymptotic formulas will be obtained in the following sections. We highlight that some of the additional technical difficulties come from controlling both the imaginary and real parts of the logarithm of $\zeta(s)$, which will have repercussions throughout this work.

In Section 3, we analyze the second moment of the sum over the zeros of the Riemann zeta-function. Following Goldston, we use the ideas developed by Montgomery \cite{M} to express sums over pairs of zeros of $\zeta(s)$ in terms of the function $F(\alpha)$ defined in \eqref{1_4}. In Section 4, we analyze the terms associated with the sum over primes. Here, we use an argument of Titchmarsh \cite{Tit2} in the estimate of certain integrals involving $S_n(t)$ with oscillatory functions, which have some peculiarities when $n\geq1$. Combining the terms in an appropriate way and using properties of $f_n$, we can take advantage of a surprising cancellation in our analysis. Finally, in Section 5, we analyze the constants $C_n$ numerically using some estimates of sums with prime numbers that could be of independent interest.
 
\subsection{Notation} The symbols $\ll$, $O(\,\cdot\,)$, and $o(\,\cdot\,)$ are used in the standard way. In Section \ref{sec:num}, to compute explicit constants, we use the notation $\alpha=O^*(\beta)$ to mean that $|\alpha|\leq \beta$. For a function $h\in L^1(\R)$, we define the Fourier transform of $h$ by $$\widehat{h}(\xi)=\int_{-\infty}^\infty h(y)e^{-2\pi i\xi y}\d y.$$

\bigskip

\section{The representation for the second moment of $S_n(t)$} 
\subsection{Representation lemma for $S_n(t)$}
We start by obtaining a new representation for the functions $S_n(t)$ for $n\geq 1$. This representation connects $S_n(t)$ with the zeros of the Riemann zeta-function and the prime numbers.
\begin{lemma}\label{lem:repres}
For each fixed $n\geq 1$ let $f_n:\R\to\R$ be defined as in \eqref{6_05_13_53}. Assume the Riemann Hypothesis. Then, for $t\geq 1$ and $x\geq4$, we have:
\begin{align} \label{13_42}
\begin{split} 
S_n(t)= & \dfrac{1}{\pi n!\,(\log x)^{n}}\displaystyle\sum_{\gamma}\im\{i^{n+2}e^{i(\gamma-t)\log x}\}\int_{0}^{\infty}\dfrac{y^{n+1}}{y^2 + ((\gamma-t)\log x)^2}\dfrac{2}{e^{y}+(-1)^{n+1}e^{-y}}\d y \\
& + \dfrac{1}{\pi}\displaystyle\sum_{2\leq m\leq x}\im\{i^{n}m^{-it}\}\dfrac{\Lambda(m)}{\sqrt{m}(\log m)^{n+1}}\,f_{n}\bigg(\dfrac{\log m}{\log x}\bigg) \\
& + \mu_n\,\dfrac{\im\{i^{n}\}}{\pi(\log x)^{n+1}} \log\dfrac{t}{2\pi}  + O\bigg(\dfrac{\sqrt{x}}{t\,(\log x)^{n+2}}\bigg),
\end{split}
\end{align} 
where the first sum runs over the ordinates of the non-trivial zeros of $\zeta(s)$, and $\mu_n=2^{-n-1}(1-2^{-n})\zeta(n+1)$ when $n$ is odd, and zero otherwise.
\end{lemma}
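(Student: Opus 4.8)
The plan is to derive \eqref{13_42} by inserting Montgomery's explicit formula into a representation of $S_n(t)$ through the logarithmic derivative of $\zeta$, and then integrating in the real variable $\sigma$. First I would start from the identity implicit in Fujii \cite{F1} and made explicit in \cite[Lemma 2]{CChi}: assuming RH, for $n\ge 1$,
\[
S_n(t)=\frac{\varepsilon_n}{\pi\,n!}\,\im\Big\{i^{\,n}\int_{1/2}^{\infty}\big(\sigma-\tfrac12\big)^{n}\,\frac{\zeta'}{\zeta}(\sigma+it)\,\d\sigma\Big\}+(\text{lower-order terms}),
\]
with an explicit sign $\varepsilon_n=\pm1$. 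The point of writing the projection as $\im\{i^{\,n}\,\cdot\,\}$ is that it selects $\re\,\frac{\zeta'}{\zeta}$ for odd $n$ and $\im\,\frac{\zeta'}{\zeta}$ for even $n$; this is exactly the parity inherited by the successive antiderivatives (compare $S_0=\tfrac1\pi\im\log\zeta$ with $S_1\sim\tfrac1\pi\re\int_{1/2}^{\infty}\log|\zeta|\,\d\sigma$), and it is the source of the uniform factor $\im\{i^{\,n}\,\cdot\,\}$ in \eqref{13_42}. This identity follows from repeatedly integrating \eqref{1_1} and applying Littlewood's lemma; RH is used to ensure that $\log\zeta$ is holomorphic for $\re s>\tfrac12$, that the boundary term at $\sigma=\tfrac12$ vanishes (since $n\ge1$), and that the integral converges at infinity.

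Next I would substitute Montgomery's explicit formula \cite{M} for $\frac{\zeta'}{\zeta}(\sigma+it)$, which for $x\ge1$ expresses it as a prime sum $-\sum_{m\le x}\Lambda(m)\,m^{-s}$ carrying an $x$-dependent smoothing weight, a sum over the nontrivial zeros with kernel of the shape $x^{\rho-s}/(s-\rho)^{2}$ (up to a factor $1/\log x$ and a companion reflected term), the contribution of the pole at $s=1$, and an archimedean/trivial-zero piece. Under RH I write $\rho=\tfrac12+i\gamma$, so that $x^{\rho-s}=x^{1/2-\sigma}\,e^{i(\gamma-t)\log x}$, and then integrate each term against $(\sigma-\tfrac12)^{n}\,\d\sigma$. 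The substitution $v=(\sigma-\tfrac12)\log x$ turns $x^{1/2-\sigma}$ into $e^{-v}$ and generates the powers of $\log x$ visible in \eqref{13_42}. Writing $u=(\gamma-t)\log x$, the zero terms become $\sum_\gamma e^{i(\gamma-t)\log x}\int_0^{\infty}\frac{v^{n+1}}{v^{2}+u^{2}}(\cdots)\,\dv$; the prime terms become $\sum_{m\le x}\Lambda(m)\,m^{-it}\,\frac{1}{\sqrt m(\log m)^{n+1}}\,f_n\!\big(\tfrac{\log m}{\log x}\big)$, with $f_n$ from \eqref{6_05_13_53} arising as the function obtained by integrating $v^{n}$ against the smoothing weight (a smooth cutoff with $f_n(0^+)=1$ and $f_n(1)=0$); and the pole/archimedean pieces collapse to $\mu_n\,\frac{\log(t/2\pi)}{(\log x)^{n+1}}$ together with the error term.

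The technical heart, and where I expect the main difficulty, is to check that after integration in $\sigma$ both the prime-side and the zero-side weights are governed by the single hyperbolic kernel $\frac{2}{e^{v}+(-1)^{n+1}e^{-v}}$, and that the prime-side integral assembles precisely into $f_n$. Exhibiting this common kernel, which one may write as $2\sum_{k\ge0}(-1)^{nk}e^{-(2k+1)v}$ for $v>0$, requires collecting the zero, trivial-zero, and archimedean contributions and recognizing the resulting series; this is the most delicate bookkeeping in the argument, and it is also where the parity-dependent sign $(-1)^{n+1}$ is fixed ($\mathrm{sech}\,v$ for odd $n$, $\csch\,v$ for even $n$). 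The archimedean main term is handled separately: the pole at $s=1$ and Stirling's formula for $\frac{\Gamma'}{\Gamma}(\tfrac{s}{2}+1)$ produce the factor $\log(t/2\pi)$, while summing the trivial-zero contributions $\sum_{j\ge1}(-1)^{j-1}(2j)^{-(n+1)}=2^{-n-1}(1-2^{-n})\zeta(n+1)$ yields the constant $\mu_n$, which survives only for odd $n$, exactly the range where $\im\{i^{\,n}\}\ne0$. The factor $i^{\,n+2}$ in the zero sum, as opposed to $i^{\,n}$ in the prime sum, comes from the second-order pole $x^{\rho-s}/(s-\rho)^{2}$: rearranging its real and imaginary parts into the real kernel $v^{n+1}/(v^{2}+u^{2})$ contributes the extra $i^{2}$.

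Finally I would assemble the error term. The dominant contribution is of size $\sqrt x/t$, coming from the pole term $x^{1-s}/(1-s)$ at $s=\sigma+it$ after the $\sigma$-integration, weighted by $(\log x)^{-(n+2)}$, which produces the stated $O\big(\sqrt x/(t(\log x)^{n+2})\big)$; the truncation error in Montgomery's formula, the Stirling error in the Gamma factor, and the tails of the $v$-integrals are all of smaller or comparable order. Justifying the interchange of the $\sigma$-integral with the sum over zeros is routine (absolute convergence via $N(t+1)-N(t)\ll\log t$ under RH), but it must be carried out uniformly in $t$ so as to preserve the error.
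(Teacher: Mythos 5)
Your skeleton is the paper's: start from the exact identity \cite[Lemma 2]{CChi},
\begin{equation*}
S_{n}(t) = -\frac{1}{\pi} \,\im\left\{\frac{i^{n}}{n!}\int_{1/2}^{\infty}\left(\sigma-\tfrac12\right)^{n}\frac{\zeta'}{\zeta}(\sigma+it)\,\d\sigma\right\}
\end{equation*}
(note: the sign is $-1$ and there are no lower-order terms, so the hedging with $\varepsilon_n$ and ``lower-order terms'' is unnecessary), insert Montgomery's explicit formula, and integrate in $\sigma$ with the substitution $y=(\sigma-\tfrac12)\log x$. Your identification of the pole term $x^{1/2-it}\,(2\sigma-1)/((\sigma-it)(1-\sigma-it))$ as the source of the $O(\sqrt{x}/(t(\log x)^{n+2}))$ error is also correct.

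However, there is a genuine gap at what you yourself call the technical heart. Montgomery's formula in the usable range $\sigma>\tfrac12$ (the form \eqref{14_8_1:38am}, i.e.\ \cite[Eq. (2.1)]{G1}) expresses the \emph{combination} $x^{\sigma-1/2}\frac{\zeta'}{\zeta}(\sigma+it)-x^{1/2-\sigma}\frac{\zeta'}{\zeta}(1-\sigma+it)$; your ``companion reflected term'' is an unknown of the same size as the quantity being solved for, and your proposal supplies no mechanism to eliminate it. The paper's mechanism is twofold: first, the functional equation plus Stirling (for the real part, \eqref{14_8_1:36am1}) and \cite[Eq. (2.3)]{G1} (for the imaginary part, \eqref{14_8_1:36am}) give $\frac{\zeta'}{\zeta}(1-\sigma+it)=-\overline{\frac{\zeta'}{\zeta}(\sigma+it)}-\log\frac{t}{2\pi}+O(\sigma^2/t)$; second, the elementary identity $\im\{i^{n}((-1)^{n}z+\overline{z})\}=0$ shows that after dividing through by the parity-adapted factor $C_n(\sigma)=x^{\sigma-1/2}+(-1)^{n+1}x^{1/2-\sigma}$, the leftover conjugate term vanishes identically under the projection $\im\{i^n\,\cdot\,\}$. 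This division by $C_n(\sigma)$ is the \emph{sole} source of the hyperbolic kernel: your series $2\sum_{k\ge 0}(-1)^{nk}e^{-(2k+1)y}$ is just the geometric expansion of $2/(e^{y}+(-1)^{n+1}e^{-y})=2/C_n(\sigma)$ after the change of variables, not something assembled from ``zero, trivial-zero, and archimedean contributions'' --- indeed, in \eqref{14_8_1:38am} the trivial zeros are already absorbed into the $O(x^{-5/2}(\sigma-1/2)/t)$ error and contribute nothing to the kernel. For the same reason your attribution of $\mu_n$ to trivial zeros is a misattribution: $\mu_n$ arises from the by-product term $-x^{1/2-\sigma}\log\frac{t}{2\pi}$ of the functional-equation substitution, whose integral $\int_0^\infty y^n e^{-y}/\cosh y\,\d y$ (odd $n$, via \cite[Eq. 3.552-3]{GR}) happens to equal $n!\,2^{-n}(1-2^{-n})\zeta(n+1)$; that this coincides with $\sum_{j\ge1}(-1)^{j-1}(2j)^{-(n+1)}$ up to normalization is a coincidence of values, not of provenance. (Your survival condition --- the term persists exactly for odd $n$ --- is right, but it is enforced by the factor $\im\{i^n\}$, since the $\log(t/2\pi)$ enters only through $\re\frac{\zeta'}{\zeta}$.) Two smaller corrections: the factor $i^{n+2}$ in the zero sum is mere sign bookkeeping, $-\im\{i^{n}z\}=\im\{i^{n+2}z\}$, since the zero-sum kernel in \eqref{14_8_1:38am} is the real expression $(2\sigma-1)/((\sigma-\tfrac12)^2+(\gamma-t)^2)=|s-\rho|^{-2}(2\sigma-1)$ rather than a second-order pole; and $f_n$ arises not from a smoothing weight on the prime sum but from the exact combination $\frac{x^{\sigma-1/2}}{m^{\sigma}}-\frac{x^{1/2-\sigma}}{m^{1-\sigma}}=\frac{2}{\sqrt{m}}\sinh\big((\sigma-\tfrac12)\log\tfrac{x}{m}\big)$ divided by $C_n(\sigma)$, which reproduces \eqref{6_05_13_53} exactly. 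Without the $C_n(\sigma)$ device your plan stalls at the reflected term, so as written the proposal does not close.
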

\begin{proof} Assuming RH, by \cite[Lemma 2]{CChi}, we have for $n\geq 1$ that	\begin{equation}\label{Lem1_eq_1}
	S_{n}(t) = -\frac{1}{\pi} \,\,\im{\left\{\dfrac{i^{n}}{n!}\int_{1/2}^{\infty}{\left(\sigma-1/2\right)^{n}\,\frac{\zeta'}{\zeta}(\sigma+it)}\,\d \sigma\right\}}.
	\end{equation}
Let us analyze the integrand in the above expression. By an explicit formula of Montgomery (see \cite[Eq. (2.1) and p. 155]{G1}), for $x\geq4$ and $s=\sigma+it$ with $\sigma>\hh$ and $t\geq 1$, it follows that
	\begin{align} \label{14_8_1:38am}
	x^{\sigma-1/2}\,\dfrac{\zeta'}{\zeta}(\sigma+it) & -x^{1/2-\sigma}\,\dfrac{\zeta'}{\zeta}(1-\sigma+it) \nonumber \\
	& = (2\sigma-1)\displaystyle\sum_{\gamma}\dfrac{x^{i(\gamma-t)}}{(\sigma-1/2)^2+(\gamma-t)^2} -\displaystyle\sum_{m\leq x}\dfrac{\Lambda(m)}{m^{it}}\bigg(\dfrac{x^{\sigma-1/2}}{m^\sigma}-\dfrac{x^{1/2-\sigma}}{m^{1-\sigma}}\bigg)\\
	& \,\,\,\,\, \,\,\,\,+ x^{1/2-it}\bigg(\dfrac{2\sigma-1}{(\sigma-it)(1-\sigma-it)}\bigg)  + O\bigg(\dfrac{x^{-5/2}(\sigma-1/2)}{t}\bigg). \nonumber 
	\end{align} 
	First, we need a relationship between $\frac{\zeta'}{\zeta}(\sigma+it)$ and $\frac{\zeta'}{\zeta}(1-\sigma+it)$ in the above formula. Using the functional equation of $\zeta(s)$ in the form $\zeta(1-s)=\pi^{-s}2^{1-s}\cos(\pi s/2)\,\Gamma(s)\zeta(s)$, the reflection principle, Stirling's formula and the bound $|\re\{\tan s\}|\ll e^{-2\,\im{s}}$ for $|\im{s}|\geq 1$, we obtain for $t\geq 1$ and $\sigma>\hh$:
	\begin{align} \label{14_8_1:36am1}
	\re\dfrac{\zeta'}{\zeta}(1-\sigma+it)=-\re\dfrac{\zeta'}{\zeta}(\sigma+it) - \log\dfrac{t}{2\pi} + O\bigg(\dfrac{\sigma^2}{t}\bigg).
	\end{align}
	By \cite[Eq. (2.3)]{G1} we also get
	\begin{align} \label{14_8_1:36am}
	\im\dfrac{\zeta'}{\zeta}(1-\sigma+it)=\im\dfrac{\zeta'}{\zeta}(\sigma+it) + O\bigg(\dfrac{\sigma-1/2}{t}\bigg).
	\end{align}
Then, combining \eqref{14_8_1:36am1} and \eqref{14_8_1:36am}, we obtain
$$
\dfrac{\zeta'}{\zeta}(1-\sigma+it)=-\overline{\dfrac{\zeta'}{\zeta}(\sigma+it)} - \log\dfrac{t}{2\pi} + O\bigg(\dfrac{\sigma^2}{t}\bigg).
$$
Inserting it  into \eqref{14_8_1:38am} and ordering conveniently, one can see that
\begin{align} 
	\begin{split}
\Big(x^{\sigma-1/2} +(-1)^{n+1}&x^{1/2-\sigma}\Big)\dfrac{\zeta'}{\zeta}(\sigma+it) \\
& = - \,x^{1/2-\sigma}\bigg((-1)^n\dfrac{\zeta'}{\zeta}(\sigma+it) + \overline{\dfrac{\zeta'}{\zeta}(\sigma+it)}\bigg)  \\
& \,\,\,\,\, \,+ (2\sigma-1)\displaystyle\sum_{\gamma}\dfrac{x^{i(\gamma-t)}}{(\sigma-1/2)^2+(\gamma-t)^2}  -\displaystyle\sum_{m\leq x}\dfrac{\Lambda(m)}{m^{it}}\bigg(\dfrac{x^{\sigma-1/2}}{m^\sigma}-\dfrac{x^{1/2-\sigma}}{m^{1-\sigma}}\bigg)\\
& \,\,\,\,\, \,- x^{1/2-\sigma}\log\dfrac{t}{2\pi} + x^{1/2-it}\bigg(\dfrac{2\sigma-1}{(\sigma-it)(1-\sigma-it)}\bigg) + O\bigg(\dfrac{\sigma^2}{t}\big(x^{-5/2}+x^{1/2-\sigma}\big)\bigg).
\end{split}
\end{align}
Dividing the above expression by $C_n(\sigma):=x^{\sigma-1/2}+(-1)^{n+1}x^{1/2-\sigma}$ and inserting it into \eqref{Lem1_eq_1}, we get
\begin{align*}
S_{n}(t) & =  \frac{1}{\pi n!} \int_{1/2}^{\infty}{\dfrac{\left(\sigma-1/2\right)^{n}}{C_n(\sigma)}\,\im\bigg\{i^{n}\bigg((-1)^n\dfrac{\zeta'}{\zeta}(\sigma+it) +
	 \overline{\dfrac{\zeta'}{\zeta}(\sigma+it)}\bigg)}x^{1/2-\sigma}\bigg\}\d \sigma\\
 &\,\,\,\,\,\,\,\,- \frac{1}{\pi n!} \int_{1/2}^{\infty}{\dfrac{\left(\sigma-1/2\right)^{n}}{C_n(\sigma)}\,\im\bigg\{i^{n}\,(2\sigma-1)\displaystyle\sum_{\gamma}\dfrac{x^{i(\gamma-t)}}{(\sigma-1/2)^2+(\gamma-t)^2} }\bigg\}\d \sigma \\
 & \,\,\,\,\,\,\,\, + \frac{1}{\pi n!} \int_{1/2}^{\infty}{\dfrac{\left(\sigma-1/2\right)^{n}}{C_n(\sigma)}\,\im\bigg\{i^{n}\,\displaystyle\sum_{m\leq x}\dfrac{\Lambda(m)}{m^{it}}\bigg(\dfrac{x^{\sigma-1/2}}{m^\sigma}-\dfrac{x^{1/2-\sigma}}{m^{1-\sigma}}\bigg)}\bigg\}\d \sigma \\ 
  & \,\,\,\,\,\,\,\, + \frac{1}{\pi n!} \int_{1/2}^{\infty}{\dfrac{\left(\sigma-1/2\right)^{n}}{C_n(\sigma)}\,\im\bigg\{i^{n}\,x^{1/2-\sigma}\log\dfrac{t}{2\pi}}\bigg\}\d \sigma \\
   & \,\,\,\,\,\,\,\, - \frac{1}{\pi n!} \int_{1/2}^{\infty}{\dfrac{\left(\sigma-1/2\right)^{n}}{C_n(\sigma)}\,\im\bigg\{i^{n}\,x^{1/2-it}\bigg(\dfrac{2\sigma-1}{(\sigma-it)(1-\sigma-it)}\bigg)}\bigg\}\d \sigma \\
      & \,\,\,\,\,\,\,\, + O\Bigg(\int_{1/2}^{\infty}{\dfrac{\left(\sigma-1/2\right)^{n}}{C_n(\sigma)}\,\dfrac{\sigma^2}{t}\big(x^{-5/2}+x^{1/2-\sigma}\big)}\,\d \sigma\Bigg) \\
      & = I_{1,n}(x,t)+I_{2,n}(x,t)+I_{3,n}(x,t)+I_{4,n}(x,t)+I_{5,n}(x,t)+O\big(I_{6,n}(x,t)\big).
\end{align*}
We analyze each term in the above expression.\\
\noindent 1. {\it First term}: Using the fact that $\im\{i^n((-1)^nz+
\overline{z})\}=0$, for $z\in\C$ and $n\geq 1$ we get that $I_{1,n}(x,t)=0$. \\
\noindent 2. {\it Second term}: Using Fubini's theorem\footnote{\,\,\, It is justified by the fact that the number of zeros on the interval $[t,t+1]$ is $O(\log t)$.} and the change of variables $y=(\sigma-1/2)\log x$, it follows that
\begin{align*}
I_{2,n}(x,t)&=-\frac{2}{\pi n!}\displaystyle\sum_{\gamma}\im\{i^{n}x^{i(\gamma-t)}\} \int_{1/2}^{\infty}{\dfrac{\left(\sigma-1/2\right)^{n+1}}{(\sigma-1/2)^2+(\gamma-t)^2} }\,\dfrac{1}{x^{\sigma-1/2}+(-1)^{n+1}x^{1/2-\sigma}}\,\d \sigma\\
& = \frac{1}{\pi n!(\log x)^n}\displaystyle\sum_{\gamma}\im\{i^{n+2}e^{i(\gamma-t)\log x}\} \int_{0}^{\infty}{\dfrac{y^{n+1}}{y^2+((\gamma-t)\log x)^2}}\,\dfrac{2}{e^{y}+(-1)^{n+1}e^{-y}}\,\d y.
\end{align*}
\noindent 3. {\it Third term}: Recalling that $f_n(x)$ is defined in \eqref{6_05_13_53}, similar computations give us
\begin{align*}
I_{3,n}(x,t)& =
\frac{1}{\pi}\displaystyle\sum_{m\leq x}\im\{i^{n}m^{-it}\}\,\dfrac{\Lambda(m)}{\sqrt{m}(\log m)^{n+1}}\,f_{n}\bigg(\dfrac{\log m}{\log x}\bigg).
\end{align*}
\noindent 4. {\it Fourth term}: Note that when $n$ is even, we obtain that $I_{4,n}(x,t)=0$. Let us suppose that $n$ is odd. Then \mbox{           \,\,      } $C_n(\sigma)=2\cosh((\sigma-1/2)\log x)$. By a change of variables and \cite[Eq. 3.552-3]{GR} we get that
\begin{align*} 
I_{4,n}(x,t)& =\dfrac{\im\{i^n\}}{2\pi n!}\log\dfrac{t}{2\pi}\int_{1/2}^{\infty}\dfrac{\left(\sigma-1/2\right)^{n}x^{1/2-\sigma}}{\cosh((\sigma-1/2)\log x)}\d \sigma \\
& =\dfrac{\im\{i^n\}}{\pi\,(\log x)^{n+1}}\,{2^{-n-1}(1-2^{-n})\zeta(n+1)}\log\dfrac{t}{2\pi}.
\end{align*}
\noindent 5. {\it Fifth term}: Using the same change of variables,
\begin{align*}
|I_{5,n}(x,t)|& \ll \sqrt{x} \int_{1/2}^{\infty}\dfrac{\left(\sigma-1/2\right)^{n+1}}{|(\sigma-it)(1-\sigma-it)|}\dfrac{1}{x^{\sigma-1/2}+(-1)^{n+1}x^{1/2-\sigma}}\d \sigma \\
& =\dfrac{\sqrt{x}}{(\log x)^{n}}\int_{0}^{\infty}\dfrac{y^{n+1}}{|((1/2-it)\log x)^2-y^2|}\dfrac{1}{e^y+(-1)^{n+1}e^{-y}}\d y \\
& \ll \dfrac{\sqrt{x}}{t\,(\log x)^{n+2}}\int_{0}^{\infty}\dfrac{y^{n+1}}{e^y+(-1)^{n+1}e^{-y}}\d y \ll  \dfrac{\sqrt{x}}{t\,(\log x)^{n+2}}.
\end{align*}
\noindent 6. {\it Sixth term}: As in the previous term, we have
\begin{align*}
|I_{6,n}(x,t)|\ll \dfrac{1}{t\,(\log x)^{n+1}}.
\end{align*}
Combining all the terms, we obtain the desired result.
\end{proof}

\smallskip

Note that in the above lemma we establish the connection between $S_n(t)$ and the function $f_n$. The following lemma summarizes useful information related to the function $f_n$ and a new auxiliary function $g_n$.
\begin{lemma}\label{lem:taylor} Let $n\geq 1$ be an integer and $f_n:(0,2)\to\R$ be the real valued function defined in \eqref{6_05_13_53}. Then, the function $g_n:(0,2)\to\R$ given by
	\begin{align} \label{21_54}
	g_n(x)= \dfrac{1-f_n(x)}{x^{n+1}}
	\end{align}
	satisfies the following properties:
	\begin{enumerate}
		\item [(I)] $g_n$ can be extended to the interval $(-2,2)$, such that $g_n\in C^\infty\big((-2,2)\big)$, and $g^2_n$ is an even function.
		\item [(II)] For $x\in (-2,2)$, the function $g_n$ has the representation
		\begin{align} \label{3_18}
		g_n(x)=\dfrac{1}{n!}\int_{0}^{\infty}e^{-y}y^{n}\bigg(\dfrac{e^{xy}+(-1)^{n+1}e^{-xy}}{e^{y}+(-1)^{n+1}e^{-y}}\bigg)\d y.
		\end{align}
		\item [(III)] In particular, $g_n(0)= 2^{-n}(1-2^{-n})\zeta(n+1)$ when $n$ is odd, and zero otherwise.
	\end{enumerate}
\end{lemma}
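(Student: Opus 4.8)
The plan is to establish part (II) first, since the integral representation \eqref{3_18} is the engine from which (I) and (III) follow almost immediately. Writing $D(y) = e^{y} + (-1)^{n+1} e^{-y}$ for the common denominator and using $2\sinh(y(1-x)) = e^{y(1-x)} - e^{-y(1-x)}$, the definition \eqref{21_54} gives, for $x \in (0,2)$,
\begin{equation*}
g_n(x) = \frac{1}{x^{n+1}} - \frac{1}{n!}\int_0^\infty y^n \frac{e^{y(1-x)} - e^{-y(1-x)}}{D(y)}\,\d y.
\end{equation*}
The heart of the matter is an algebraic identity: adding the integral above to the candidate integral in \eqref{3_18}, the numerators combine (using $e^{-y(1-x)} = e^{y(x-1)}$) and collapse to $e^{-xy}\,D(y)$, so that
\begin{equation*}
\frac{1}{n!}\int_0^\infty y^n \frac{e^{y(1-x)} - e^{-y(1-x)}}{D(y)}\,\d y + \frac{1}{n!}\int_0^\infty e^{-y} y^n \frac{e^{xy} + (-1)^{n+1} e^{-xy}}{D(y)}\,\d y = \frac{1}{n!}\int_0^\infty y^n e^{-xy}\,\d y = \frac{1}{x^{n+1}}.
\end{equation*}
Rearranging yields exactly \eqref{3_18}. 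Before performing this manipulation I would check that each of the two integrals converges separately for $x \in (0,2)$, so that splitting $\int_0^\infty y^n e^{-xy}\,\d y$ into two pieces is legitimate: near $y = 0$ the factor $y^n$ (with $n \ge 1$) absorbs the simple zero of $D(y)$ that occurs when $n$ is even, while for large $y$ the denominator $D(y) \sim e^y$ dominates, since $|1-x| < 1$ and $x < 2$ force the numerators to grow strictly slower than $e^y$.

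The extension in (I) then comes for free from \eqref{3_18}: the integral defining $g_n$ converges for every $x \in (-2,2)$, because the two summands in the numerator $e^{-y}(e^{xy} + (-1)^{n+1} e^{-xy})$, divided by $D(y) \sim e^y$, decay like $e^{(x-2)y}$ and $e^{(-x-2)y}$ respectively, both exponentially small once $-2 < x < 2$. Taking this integral as the definition of $g_n$ on $(-2,2)$ extends the original function, and smoothness follows by differentiating under the integral sign: on any compact $[-2+\delta,\,2-\delta]$ the $k$-th $x$-derivative of the integrand is dominated by a constant times $y^{n+k} e^{-\delta y}$, which is integrable, whence $g_n \in C^\infty\big((-2,2)\big)$. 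The parity claim is read off the same formula by sending $x \mapsto -x$: the numerator $e^{xy} + (-1)^{n+1} e^{-xy}$ is invariant when $n$ is odd and changes sign when $n$ is even, so $g_n$ is even for odd $n$ and odd for even $n$; in either case $g_n^2$ is even.

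Finally, part (III) is a direct evaluation of \eqref{3_18} at $x = 0$. The numerator becomes $e^{-y}\big(1 + (-1)^{n+1}\big)$, which vanishes identically when $n$ is even, giving $g_n(0) = 0$. When $n$ is odd it equals $2e^{-y}$, and expanding $e^{-y}/(e^y + e^{-y}) = \sum_{k \ge 1} (-1)^{k-1} e^{-2ky}$ and integrating term by term gives
\begin{equation*}
g_n(0) = \frac{2}{n!}\sum_{k=1}^\infty (-1)^{k-1} \int_0^\infty y^n e^{-2ky}\,\d y = 2\sum_{k=1}^\infty \frac{(-1)^{k-1}}{(2k)^{n+1}} = 2^{-n}\big(1 - 2^{-n}\big)\zeta(n+1),
\end{equation*}
using $\sum_{k\ge1}(-1)^{k-1}k^{-(n+1)} = \big(1-2^{-n}\big)\zeta(n+1)$. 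I expect the only genuinely delicate point in the whole argument to be the bookkeeping in the key cancellation of (II) — keeping the sign conventions $(-1)^{n+1}$ straight across the two integrals and confirming their separate convergence — while everything else is a routine application of differentiation under the integral sign and summation of a geometric series.
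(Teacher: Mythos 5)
Your proof is correct and takes essentially the same route as the paper: the same collapse identity $\frac{f_n(x)}{x^{n+1}} + \frac{1}{n!}\int_0^\infty e^{-y}y^n\,\frac{e^{xy}+(-1)^{n+1}e^{-xy}}{e^y+(-1)^{n+1}e^{-y}}\,\d y = \frac{1}{n!}\int_0^\infty y^n e^{-xy}\,\d y = x^{-n-1}$ establishes (II), the same parity relation $g_n(-x)=(-1)^{n+1}g_n(x)$ yields (I), and your geometric-series evaluation of $g_n(0)$ for odd $n$ is just an explicit derivation of the tabulated integral the paper cites from Gradshteyn--Ryzhik. The only microscopic slip is in your domination bound for (I): when $n$ is even and the derivative order $k$ is odd, the integrand's derivative near $y=0$ is of size $y^{n+k-1}$ rather than $y^{n+k}$ (the denominator $2\sinh y$ costs one power of $y$), but this is still integrable since $n\ge 1$, so nothing breaks.
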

\begin{proof} 
	Using the definition of $f_n$, it follows that for $x\in(0,2)$,
	\begin{align*}
	\dfrac{f_{n}(x)}{x^{n+1}}+\dfrac{1}{n!}\int_{0}^{\infty}e^{-y}y^{n}\bigg(\dfrac{e^{xy}+(-1)^{n+1}e^{-xy}}{e^{y}+(-1)^{n+1}e^{-y}}\bigg)\d y & = \dfrac{1}{n!}\int_{0}^{\infty}\,y^{n}\bigg(\dfrac{e^{(1-x)y}+(-1)^{n+1}e^{-(x+1)y}}{e^{y}+(-1)^{n+1}e^{-y}}\bigg)\d y \\
	& =\dfrac{1}{n!}\int_{0}^{\infty}\,y^{n}e^{-xy}\,\d y=\dfrac{1}{x^{n+1}},
	\end{align*}
	where in the last equality we have used \cite[Eq. 3.351-3]{GR}. This implies that 
	\begin{align} \label{0_55}
	g_n(x)=\dfrac{1}{n!}\int_{0}^{\infty}e^{-y}y^{n}\bigg(\dfrac{e^{xy}+(-1)^{n+1}e^{-xy}}{e^{y}+(-1)^{n+1}e^{-y}}\bigg)\d y,
	\end{align} 
	for $x\in(0,2)$. Using dominated convergence one can see that the right-hand side of \eqref{0_55} defines a function in $C^\infty\big((-2,2)\big)$. Then, this representation allows us to extend the function $g_n$ to $(-2,2)$. On the other hand, $g_n(-x)=(-1)^{n+1}g_n(x)$, and this implies that $g^2_n$ is an even function. When $n$ is even, $g_n$ is an odd function and therefore $g_n(0)=0$. When $n$ is odd, using \cite[Eq. 3.552-3]{GR} we get
	$$
	g_n(0)=\dfrac{1}{n!}\int_{0}^{\infty}\dfrac{e^{-y}y^{n}}{\cosh y}\,\d y= 2^{-n}(1-2^{-n})\,\zeta(n+1).
	$$
\end{proof}

\subsection{Proof of Theorem \ref{thm:main}} Lemma \ref{lem:repres} allows us to obtain the second moment of $S_n(t)$ in terms of certain integrals depending of each summand involved in \eqref{13_42}. Let $n\geq 1$ be a fixed integer. Using Lemma \ref{lem:repres}, we have for $t\geq 1$ and $x\geq 4$ that
\begin{align*} 
S_n(t)- I_{3,n}(x,t)- I_{4,n}(x,t)= I_{2,n}(x,t) + O\bigg(\dfrac{\sqrt{x}}{t\,(\log x)^{n+2}}\bigg),
\end{align*} 
where
\begin{align*}
I_{2,n}(x,t)= \frac{1}{\pi n!(\log x)^n}\displaystyle\sum_{\gamma}\im\{i^{n+2}e^{i(\gamma-t)\log x}\} \int_{0}^{\infty}{\dfrac{y^{n+1}}{y^2+((\gamma-t)\log x)^2}}\,\dfrac{2}{e^{y}+(-1)^{n+1}e^{-y}}\,\d y,
\end{align*}
\begin{align*}
I_{3,n}(x,t)=\frac{1}{\pi}\displaystyle\sum_{m\leq x}\im\{i^{n}m^{-it}\}\,\dfrac{\Lambda(m)}{\sqrt{m}(\log m)^{n+1}}\,f_{n}\bigg(\dfrac{\log m}{\log x}\bigg), 
\end{align*}
and
\begin{align*}
I_{4,n}(x,t)=\mu_n\,\dfrac{\im\{i^{n}\}}{\pi(\log x)^{n+1}} \log\dfrac{t}{2\pi}.
\end{align*}
Then, for $T\geq 3$, squaring the above expression and integrating from $1$ to $T$ we obtain
\begin{align} \label{2_42}
\begin{split}
\int_{1}^T|S_n(t)|^2\,\d t & =\int_{1}^T|I_{2,n}(x,t)|^2\,\d t +2\int_{1}^TS_n(t)\,I_{3,n}(x,t)\,\d t - 
\int_{1}^T|I_{3,n}(x,t)|^2\d t\\
& \,\,\,\,\,\,-
\int_{1}^T|I_{4,n}(x,t)|^2\,\d t+2\int_{1}^TS_n(t)\,I_{4,n}(x,t)\,\d t -2\int_{1}^TI_{3,n}(x,t)\,I_{4,n}(x,t)\,\d t\\
& \,\,\,\,\,\, + O\Bigg(\dfrac{\sqrt{x}}{(\log x)^{n+2}}\int_{1}^T\dfrac{|I_{2,n}(x,t)|}{t}\d t\Bigg)+O\bigg(\dfrac{x}{(\log x)^{2n+4}}\bigg).
\end{split}
\end{align}
Using the continuity of $S_{n}(t)$, we get
\begin{align}  \label{1_22}
\int_{1}^T|S_n(t)|^2\,\d t=\int_{0}^T|S_n(t)|^2\,\d t  + O(1).
\end{align}
Now, let us analyze the right-hand side of \eqref{2_42}. Note that $\mu_n=0$ when $n$ is even. Then, we have that
\begin{align}  \label{1_23}
\int_{1}^T|I_{4,n}(x,t)|^2\d t=\mu^2_n\,\dfrac{(\im\{i^{n}\})^2}{\pi^2(\log x)^{2n+2}}\int_{1}^T \log^2\dfrac{t}{2\pi}\d t = \dfrac{\mu_n^2}{\pi^2(\log x)^{2n+2}}\,T\log^2 T + O\bigg(\dfrac{T\log T}{(\log x)^{2n+2}}\bigg).
\end{align}
Furthermore, using the relation $S'_{n+1}(t)=S_n(t)$, the bound $S_n(t)= O(\log t)$ (see \eqref{1_12}), and integration by parts, we obtain
\begin{align}  \label{1_24}
2\int_{1}^TS_n(t)\,I_{4,n}(x,t)\,\d t &=\mu_n\,\dfrac{2\,\im\{i^{n}\}}{\pi(\log x)^{n+1}} \int_{1}^TS'_{n+1}(t)\log\dfrac{t}{2\pi}\d t=O\bigg(\dfrac{\log^2T}{(\log x)^{n+1}}\bigg).
\end{align}
Observe that by (I) from Lemma \ref{lem:taylor}, it is clear that $|f_n(y)|\ll 1$ for $y\in (0,1]$. Then, using the estimate $\Lambda(m)\leq \log m$ and integration by parts, we have 
\begin{align}  \label{1_25}
\begin{split}
\bigg|2\int_{1}^TI_{3,n}(x,t)\,I_{4,n}(x,t)\,\d t
\bigg|& \ll \dfrac{1}{(\log x)^{n+1}}\displaystyle\sum_{m\leq x}\dfrac{\Lambda(m)}{\sqrt{m}(\log m)^{n+1}}\bigg|\int_{1}^T \im\{i^{n}m^{-it}\}\log\dfrac{t}{2\pi}\d t\bigg|\\
& \ll \dfrac{\log T}{(\log x)^{n+1}}\displaystyle\sum_{m\leq x}\dfrac{1}{\sqrt{m}} \ll \dfrac{\sqrt{x}\log T}{(\log x)^{n+1}}.
\end{split}
\end{align}
We estimate the first error term in \eqref{2_42} using Cauchy-Schwarz to get
\begin{align}  \label{1_26}
\begin{split} 
\dfrac{\sqrt{x}}{(\log x)^{n+1}}\int_{1}^T\dfrac{|I_{2,n}(x,t)|}{t}\d t &\leq
\dfrac{\sqrt{x}}{(\log x)^{n+1}}\Bigg(\int_{1}^T|I_{2,n}(x,t)|^2\d t\Bigg)^{1/2}.
\end{split}
\end{align}
Let us define the following integrals:
$$
R_n(x,T)=\int_{1}^T|I_{2,n}(x,t)|^2\d t, \hspace{1.0cm} H_n(x,T)=2\int_{1}^TS_n(t)\,I_{3,n}(x,t)\,\d t, \hspace{0.2cm}
$$
and 
$$ G_n(x,T)=\int_{1}^T|I_{3,n}(x,t)|^2\d t.
$$
Plugging \eqref{1_22}, \eqref{1_23}, \eqref{1_24}, \eqref{1_25} and \eqref{1_26} into \eqref{2_42} gives us
\begin{align} \label{2_39}
\begin{split}
\int_{0}^T|S_n(t)|^2\,\d t & =R_n(x,T) + H_n(x,T) -G_n(x,T)-\dfrac{\mu_n^2}{\pi^2(\log x)^{2n+2}}\,T\log^2 T\\
& \,\,\,\,\,\, + O\left(\dfrac{T\log T}{(\log x)^{2n+2}}\right)+O\bigg(\dfrac{\sqrt{xR_n(x,T)}}{(\log x)^{n+1}}\bigg)+O\bigg(\dfrac{x\log^2T}{(\log x)^{2n+4}}\bigg).
\end{split}
\end{align}
Choosing $x=T^\beta$, for a fixed $0<\beta<\hh$, we get that
\begin{align*}
\int_{0}^T|S_n(t)|^2\,\d t & =R_n(T^\beta,T) + H_n(T^\beta,T) -G_n(T^\beta,T)-\dfrac{\mu_n^2}{\pi^2\beta^{2n+2}}\dfrac{T}{(\log T)^{2n}}\\
& \,\,\,\,\,\, + O\left(\dfrac{T}{(\log T)^{2n+1}}\right)+O\bigg(\dfrac{T^{\beta/2} \sqrt{R_n(T^\beta,T)}}{(\log T)^{n+1}}\bigg).
\end{align*}
We conclude our desired result by using the asymptotic formulas for $R_n(T^\beta,T)$ and $H_n(T^\beta,T) -G_n(T^\beta,T)$ given by Propositions \ref{lem:R} and \ref{lem:gPlusH}  respectively. We remark that by Proposition \ref{lem:R} and \eqref{3_12_9_6}, we can use the bound $R_n(T^\beta,T)=O(T)$ to estimate the error term. \qed

\smallskip

In the following sections, we  will concentrate on obtaining the asymptotic formulas for $R_n(x,T), H_n(x,T)$ and $G_n(x,T)$. Throughout these sections, we will assume that $n\geq 1$ is a given fixed integer. 
\section{Asymptotic formula for $R_n(x,T)$: The sum over the zeros of $\zeta(s)$}

Our objective is to evaluate the mean square of the sum over the zeros of the Riemann zeta-function that appears in \eqref{2_39}. We recall that for $T\geq 3$ and $x\geq 4$,
$$
R_n(x,T)=\dfrac{1}{\pi^2 (n!)^2(\log x)^{2n}}\int_{1}^T\Bigg|\displaystyle\sum_{\gamma}\im\{i^{n+2}e^{i(\gamma-t)\log x}\}\int_{0}^{\infty}\dfrac{y^{n+1}}{y^2 + ((\gamma-t)\log x)^2}\dfrac{2}{(e^{y}+(-1)^{n+1}e^{-y})}\d y\Bigg|^2\d t.
$$

\begin{lemma} \label{17_8_1:31am}
	Let $g_n$ be the function defined in \eqref{21_54}. Assume the Riemann Hypothesis. Then, for $T\geq 3$ and $x\geq 4$ we have
	\begin{align}  \label{20_39}
	R_{n}(x,T)=\dfrac{1}{(\log x)^{2n+1}}\displaystyle\sum_{0<\gamma,\gamma'\leq T}\widehat{k_{n}}((\gamma-\gamma')\log x) + O\bigg(\dfrac{\log^3T}{(\log x)^{2n}}\bigg),
	\end{align}
where the function $k_n:\R\to\R$ is given by
	\begin{equation} \label{18_30}
	k_n(\xi) = \left\{
	\begin{array}{ll}
	g^2_n(2\pi\xi), & \mathrm{if\ } |\xi|\leq \frac{1}{2\pi}\\
	\dfrac{1}{(2\pi\xi)^{2n+2}},    & \mathrm{if\ } |\xi|\geq \frac{1}{2\pi}.
	\end{array}
	\right.
	\end{equation}
Moreover, we have that
\begin{align} \label{3_20}
|\widehat{k_n}(y)|\ll \min\bigg\{1,\dfrac{1}{|y|^2}\bigg\}.
\end{align} 
\end{lemma}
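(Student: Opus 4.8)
The plan is to run Montgomery's explicit-formula method, exactly as Goldston did for $n=0$ in \cite{G1}. Set $A_n(u)=\int_0^\infty \frac{y^{n+1}}{y^2+u^2}\,\frac{2}{e^{y}+(-1)^{n+1}e^{-y}}\,\dy$ and $\Phi_n(u)=\im\{i^{n+2}e^{iu}\}\,A_n(u)$, so that $R_n(x,T)=\frac{1}{\pi^2(n!)^2(\log x)^{2n}}\int_1^T\big(\sum_\gamma \Phi_n((\gamma-t)\log x)\big)^2\dt$. First I would record the pointwise bound $0\le A_n(u)\ll\min\{1,u^{-2}\}$ (bound $y^{n+1}/(y^2+u^2)$ by $y^{n-1}$ for $|u|\le1$ and by $y^{n+1}/u^2$ for $|u|\ge1$), whence $|\Phi_n(u)|\le A_n(u)\ll(1+u^2)^{-1}$ and $\Phi_n\in L^1\cap L^2$. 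Together with the density bound $N(t+1)-N(t)=O(\log t)$ this gives $\sum_\gamma|\Phi_n((\gamma-t)\log x)|\ll\log t$ and legitimizes, by Tonelli, expanding the square into the absolutely convergent double sum $\frac{1}{\pi^2(n!)^2(\log x)^{2n}}\sum_{\gamma,\gamma'}\int_1^T\Phi_n((\gamma-t)\log x)\Phi_n((\gamma'-t)\log x)\dt$.

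Next I would identify the kernel. Substituting $s=t\log x$ shows that completing each inner integral to $\R$ produces the autocorrelation $K_n(v):=\int_{-\infty}^\infty\Phi_n(v+u)\Phi_n(u)\,\du$, via $\int_{-\infty}^\infty\Phi_n((\gamma-t)\log x)\Phi_n((\gamma'-t)\log x)\dt=\tfrac{1}{\log x}K_n((\gamma-\gamma')\log x)$. The crux is to prove $K_n=\pi^2(n!)^2\,\widehat{k_n}$. Since $\Phi_n$ is real one has $\widehat{K_n}=|\widehat{\Phi_n}|^2$; using the Poisson-kernel transform $\int_{\R}(y^2+u^2)^{-1}e^{-2\pi i\xi u}\,\du=\tfrac{\pi}{y}e^{-2\pi|\xi|y}$ gives $\widehat{A_n}(\xi)=2\pi\int_0^\infty \frac{y^{n}e^{-2\pi|\xi|y}}{e^{y}+(-1)^{n+1}e^{-y}}\,\dy$. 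Writing $\im\{i^{n+2}e^{iu}\}=\pm\cos u$ ($n$ odd) or $\pm\sin u$ ($n$ even) expresses $\widehat{\Phi_n}(\xi)$ as a $\pm$-combination of $\widehat{A_n}(\xi\mp\tfrac{1}{2\pi})$. For $|\xi|<\tfrac1{2\pi}$ the two shifted arguments have opposite signs and, after factoring $e^{-y}$, the combination becomes exactly the integral \eqref{3_18} representing $g_n(2\pi\xi)$; for $|\xi|>\tfrac1{2\pi}$ both arguments have the same sign and the hyperbolic factors cancel, leaving $\int_0^\infty y^n e^{-2\pi|\xi|y}\,\dy=n!/(2\pi|\xi|)^{n+1}$. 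Squaring gives $|\widehat{\Phi_n}(\xi)|^2=\pi^2(n!)^2 k_n(\xi)$ with $k_n$ precisely as in \eqref{18_30}, and Fourier inversion together with the evenness of $k_n$ yields $K_n=\pi^2(n!)^2\widehat{k_n}$. Hence the completed, in-range part of the double sum equals $\tfrac{1}{(\log x)^{2n+1}}\sum_{0<\gamma,\gamma'\le T}\widehat{k_n}((\gamma-\gamma')\log x)$.

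The main obstacle will be the error analysis. Writing $m(t)=\sum_\gamma\Phi_n((\gamma-t)\log x)$ and $E(t)=\sum_{\gamma\notin(0,T]}\Phi_n((\gamma-t)\log x)$, the identity above reads $\int_1^T m^2\,\dt=\tfrac1{\log x}\sum_{0<\gamma,\gamma'\le T}K_n((\gamma-\gamma')\log x)-\Delta_1+\Delta_2$, where $\Delta_1=\sum_{0<\gamma,\gamma'\le T}\int_{\R\setminus[1,T]}\Phi_n\Phi_n\,\dt$ accounts for truncating the $t$-integral and $\Delta_2=\int_1^T(2m(t)E(t)-E(t)^2)\,\dt$ for the zeros outside $(0,T]$. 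I would bound these using only $|\Phi_n(u)|\ll(1+u^2)^{-1}$, $|m(t)|\ll\log T$, and the crude grouping of zeros into unit $t$-windows, on each of which $N$ increases by $O(\log T)$. A short computation gives $\int_1^T|E(t)|\,\dt\ll(\log T/\log x)^2$ (bounding each far zero's $t$-integral by the decay of $\Phi_n$ and summing against the density bound) and $\sup_{[1,T]}|E|\ll\log T$; hence $|\Delta_2|\ll(\log T)^3/(\log x)^2$ and, similarly, $|\Delta_1|\ll(\log T)^2/\log x$. After dividing by $\pi^2(n!)^2(\log x)^{2n}$ both are absorbed into $O(\log^3 T/(\log x)^{2n})$, which is the claimed error. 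The delicate point is to keep every rearrangement absolutely convergent near $t=0,T$ and for far-off zeros, so that the density bound suffices with no appeal to cancellation.

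Finally, the decay estimate \eqref{3_20} is immediate from $\widehat{k_n}=\tfrac{1}{\pi^2(n!)^2}K_n$ with $K_n=\int_\R\Phi_n(y+u)\Phi_n(u)\,\du$: Cauchy--Schwarz gives $|K_n(y)|\le\|\Phi_n\|_2^2\ll1$, while the convolution bound $\int_\R (1+(y+u)^2)^{-1}(1+u^2)^{-1}\,\du\ll(1+y^2)^{-1}$ (together with $|\Phi_n(u)|\ll(1+u^2)^{-1}$) gives $|K_n(y)|\ll|y|^{-2}$; combining yields $|\widehat{k_n}(y)|\ll\min\{1,|y|^{-2}\}$.
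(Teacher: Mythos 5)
Your proposal is correct and, in its main architecture, retraces the paper's proof: the same expansion of the square into an absolutely convergent double sum over pairs of zeros (justified by the density bound $O(\log t)$ per unit interval), the same completion of the $t$-integral to $\R$ producing a correlation kernel per Montgomery's argument, and the same Poisson-kernel Fourier computation identifying $k_n$ through the representation \eqref{3_18} of $g_n$, with the cancellation of the hyperbolic factor for $|\xi|>\tfrac{1}{2\pi}$. Three points of genuine divergence are worth recording. First, you route the kernel identification through the correlation theorem $\widehat{K_n}=|\widehat{\Phi_n}|^2$, which absorbs the parity of $\Phi_n$ automatically; the paper instead writes the autocorrelation as $(-1)^{n+1}\,h_n*h_n$ and sets $k_n=\tfrac{(-1)^{n+1}}{\pi^2(n!)^2}(\widehat{h_n})^2$, tracking the sign by hand (the two agree because $\widehat{h_n}$ is real for $n$ odd and purely imaginary for $n$ even, so $(-1)^{n+1}(\widehat{h_n})^2=|\widehat{h_n}|^2$). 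Second, you spell out the truncation errors $\Delta_1,\Delta_2$ that the paper dispatches by citing Montgomery \cite[p.~187]{M} and Goldston \cite[p.~158]{G1}; your bounds $|m(t)|\ll\log T$, $\sup_{[1,T]}|E|\ll\log T$, $\int_1^T|E(t)|\,\dt\ll(\log T/\log x)^2$ do yield the stated $O(\log^3T/(\log x)^{2n})$, but note one detail in the sketched far-zero summation: for a zero at height $T+j$ the bound $1/(j(\log x)^2)$ on its $t$-integral is not summable against the density weight $\log(T+j)$ over all $j$, so for $j>T$ you must switch to the alternative bound $T/(1+j^2(\log x)^2)$ coming from the length of the integration range; with that split the claimed estimate holds. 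Third, your proof of \eqref{3_20} is genuinely different: you stay on the physical side, getting $|K_n(y)|\le\|\Phi_n\|_2^2\ll1$ by Cauchy--Schwarz and $|K_n(y)|\ll|y|^{-2}$ from $\int_{\R}(1+(y+u)^2)^{-1}(1+u^2)^{-1}\,\du\ll(1+y^2)^{-1}$, whereas the paper argues on the Fourier side, using $k_n\in L^1(\R)$ for boundedness and two integrations by parts, which requires checking that $k_n$ is absolutely continuous with bounded derivatives off $\pm\tfrac{1}{2\pi}$ (continuity at the break points holding because $g_n(1)=1$). Your route buys a cleaner argument that bypasses any smoothness discussion of $k_n$ at $\pm\tfrac{1}{2\pi}$; the paper's buys a template that generalizes to kernels whose physical-side decay is not as transparent.
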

\begin{proof} Define the function
	$$
	h_{n}(u)=\im\{i^{n+2}e^{iu}\} \int_{0}^{\infty}\dfrac{y^{n+1}}{y^2+u^2}\dfrac{2}{(e^{y}+(-1)^{n+1}e^{-y})}\d y.
	$$
Since $|h_{n}(u)|\ll \min\{1,1/{u^2}\}\ll1/(1+u^2)$, using Fubini's theorem we have 
	$$
	R_n(x,T)=\dfrac{1}{\pi^2(n!)^2(\log x)^{2n}}\displaystyle\sum_{\gamma,\gamma}\int_{1}^{T}h_{n}((\gamma-t)\log x)\,h_{n}((\gamma'-t)\log x)\,\d t.
	$$
	Note that $h_n$ is an even function when $n$ is odd and $h_n$ is an odd function when $n$ is even. Using an argument of Montgomery \cite[p. 187]{M} (see also \cite[p. 158]{G1}) one can see that
	\begin{align} \label{19_26}
	\begin{split} 
	R_{n}(x,T)& =\dfrac{1}{\pi^2(n!)^2(\log x)^{2n}}\displaystyle\sum_{0<\gamma,\gamma'\leq T}\int_{-\infty}^{\infty}h_{n}((\gamma-t)\log x)\,h_{n}((\gamma'-t)\log x)\,\d t + O\bigg(\dfrac{\log^3T}{(\log x)^{2n}}\bigg) \\
	& =\dfrac{(-1)^{n+1}}{\pi^2(n!)^2(\log x)^{2n+1}}\displaystyle\sum_{0<\gamma,\gamma'\leq T}h_{n}*h_{n}((\gamma-\gamma')\log x) + O\bigg(\dfrac{\log^3T}{(\log x)^{2n}}\bigg).
	\end{split} 
	\end{align}
	Let us calculate the Fourier transform of $h_{n}$. Using Fubini's theorem, it follows that for $\xi> 0$
	\begin{align*}
	\widehat{h_{n}}(\xi) & =\int_{-\infty}^{\infty}\Bigg(\im\{i^{n+2}e^{iu}\} \int_{0}^{\infty}\dfrac{y^{n+1}}{y^2+u^2}\dfrac{2}{(e^{y}+(-1)^{n+1}e^{-y})}\d y\Bigg)(\cos(2\pi \xi u)-i\sin(2\pi \xi u))\,\d u  \\
		& = \im\{i^{n+2}\}\int_{0}^{\infty}\Bigg( \int_{0}^{\infty}\dfrac{\cos(2\pi \xi u)\cos(u)}{y^2+u^2}\d u\Bigg)\dfrac{4\,y^{n+1}}{(e^{y}+(-1)^{n+1}e^{-y})}\,\d y  \\
	&\,\,\,\,\,\,\,-  i\,\im\{i^{n+3}\}\int_{0}^{\infty}\Bigg( \int_{0}^{\infty}\dfrac{\sin(2\pi \xi u)\sin(u)}{y^2+u^2}\d u\Bigg)\dfrac{4\,y^{n+1}}{(e^{y}+(-1)^{n+1}e^{-y})}\,\d y,
\end{align*}
where we have used the parity of the involved functions.	Then, using the formulas \cite[Eq. 3.742-1 and 3.742-3]{GR} we write
\begin{align*}
\widehat{h_{n}}(\xi) &= \pi\,\im\{i^{n+2}\}\int_{0}^{\infty}\big(e^{-|2\pi\xi-1|y}+e^{-(2\pi\xi+1)y}\big)\dfrac{y^{n}}{(e^{y}+(-1)^{n+1}e^{-y})}\,\d y  \\
&\,\,\,\,\,\,\,-  i\pi\,\im\{i^{n+3}\}\int_{0}^{\infty}\big(e^{-|2\pi\xi-1|y}-e^{-(2\pi\xi+1)y}\big)\dfrac{y^{n}}{(e^{y}+(-1)^{n+1}e^{-y})}\,\d y.
\end{align*}
For $2\pi\xi\geq 1$, making a separate computation of the $n$ odd and $n$ even cases, using \cite[Eq. 3.351-3]{GR}, we obtain
\begin{align*}
\widehat{h_{n}}(\xi) &=\big(\im\{i^{n+2}\}-i\,\im\{i^{n+3}\}\big)\dfrac{n!}{{2^{n+1}\pi^{n}\xi^{n+1}}}.
\end{align*}
On the other hand, for $0<2\pi\xi<1$ we obtain that
\begin{align*}
\widehat{h_{n}}(\xi) &= \pi\,\im\{i^{n+2}\}\int_{0}^{\infty}e^{-y}y^{n}\dfrac{2\cosh(2\pi\xi y)}{(e^{y}+(-1)^{n+1}e^{-y})}\,\d y  -  i\pi\,\im\{i^{n+3}\}\int_{0}^{\infty}e^{-y}y^{n}\dfrac{2\sinh(2\pi\xi y)}{(e^{y}+(-1)^{n+1}e^{-y})}\,\d y.
\end{align*}
Defining the even real-valued function 
\begin{align*}  
k_{n}(\xi)=\frac{(-1)^{n+1}}{\pi^2(n!)^2}(\widehat{h_{n}}(\xi))^2,
\end{align*}
we have
\begin{align*}
\widehat{k_{n}}(y)=\frac{(-1)^{n+1}}{\pi^2(n!)^2}\big(h_{n}*h_{n}\big)(y),
\end{align*}  
and this implies in \eqref{19_26} that 
	\begin{align*} 
R_{n}(x,T)=\dfrac{1}{(\log x)^{2n+1}}\displaystyle\sum_{0<\gamma,\gamma'\leq T}\widehat{k_n}((\gamma-\gamma')\log x) + O\bigg(\dfrac{\log^3T}{(\log x)^{2n}}\bigg).
\end{align*}
Finally, we calculate $k_n$. For $2\pi\xi\geq 1$ we obtain 
\begin{align*}
k_n(\xi)=\frac{(-1)^{n+1}}{\pi^2(n!)^2}\bigg(\big(\im\{i^{n+2}\}-i\,\im\{i^{n+3}\}\big)\dfrac{n!}{{2^{n+1}\pi^{n}\xi^{n+1}}}\bigg)^2 = \dfrac{1}{(2\pi\xi)^{2n+2}},
\end{align*}
and using the parity of the involved functions, we get that the above expression holds for $|2\pi\xi|\geq 1$. On the other hand, for $0< 2\pi\xi< 1$, we have that
\begin{align*}
k_n(\xi)& =\frac{(-1)^{n+1}}{\pi^2(n!)^2}\Bigg(\pi\,\im\{i^{n+2}\}\int_{0}^{\infty}e^{-y}y^{n}\dfrac{2\cosh(2\pi\xi y)}{(e^{y}+(-1)^{n+1}e^{-y})}\,\d y   \\
& \,\,\,\,\,\,\,\,\,\,\,\,\,\,\,\,\,\,\,\,\,\,\,\,\,\,\,\,\,\,\,\,\,\,\,\,\,\,\,\,\,\,\,\,\,\,\,\,\,\,\,\,\,\,\,\,\,\,\,\,\,\,\,\,\,\,\,\,\,\,-  i\pi\,\im\{i^{n+3}\}\int_{0}^{\infty}e^{-y}y^{n}\dfrac{2\sinh(2\pi\xi y)}{(e^{y}+(-1)^{n+1}e^{-y})}\,\d y\Bigg)^2 \\
& = g^2_n(2\pi\xi),
\end{align*}
where in the last line we have treated separately the cases $n$ odd and $n$ even, and used \eqref{3_18}. Using (I) from Lemma \ref{lem:taylor}, it follows that the above expression holds for $|\xi|\leq \frac{1}{2\pi}$. To prove the estimate \eqref{3_20} (see \cite[p. 161]{G1}), we use that $k_n\in L^1(\R)$ implies $|\widehat{k_n}(\xi)|\ll1$, and that integration by parts twice\footnote{\,\,\, The function $k_n$ is absolutely continuous and has bounded derivatives on $\R-\{\pm\frac{1}{2\pi}\}$.} implies $|\widehat{k_n}(y)|\ll \frac{1}{|y|^2}$.
\end{proof}

\medskip

Finally, the following proposition establishes the relation between $R_n(x,T)$ and the function $F(\alpha,T)$.
\begin{proposition} \label{lem:R} Let $0<\beta\leq 1$ be a fixed number. Assume the Riemann Hypothesis. Then, \begin{align*}
R_{n}(T^\beta,T)=\dfrac{T}{2\pi^2(\log T)^{2n}}\Bigg[\bigg(A_n+\dfrac{1}{2n}\bigg)\dfrac{1}{\beta^{2n}}+\bigg(\int_{1}^{\infty}\dfrac{F(\alpha)}{\alpha^{2n+2}}\d\alpha-\dfrac{1}{2n}\bigg)+\dfrac{2\,\mu^2_n}{\beta^{2n+2}}\Bigg] + O\bigg(\dfrac{T\sqrt{\log\log T}}{(\log T)^{2n+1/2}}\bigg),
\end{align*}
as $T\to\infty$, where
\begin{align} \label{22_24}
A_{n}=\int_0^1
\alpha\,g^2_n(\alpha)\,\d \alpha, 
\end{align}
and $\mu_n$ is defined as in Lemma \ref{lem:repres}. 
\end{proposition}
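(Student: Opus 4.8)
The plan is to invoke Lemma~\ref{17_8_1:31am} with $x=T^{\beta}$, so that $\log x=\beta\log T$, and to reduce everything to the evaluation of the pair-sum over zeros
\[
\frac{1}{(\log x)^{2n+1}}\sum_{0<\gamma,\gamma'\le T}\widehat{k_n}\big((\gamma-\gamma')\log x\big).
\]
Writing $\widehat{k_n}((\gamma-\gamma')\log x)=\int_{-\infty}^{\infty}k_n(\xi)\,e^{-2\pi i\xi(\gamma-\gamma')\log x}\,\d\xi$ and substituting $\alpha=2\pi\beta\xi$ turns the exponential into $T^{-i\alpha(\gamma-\gamma')}$; after inserting Montgomery's weight $w(\gamma-\gamma')$ and recalling the definition \eqref{1_4} of $F$, the argument of Montgomery~\cite[p.~187]{M} (used already in the proof of Lemma~\ref{17_8_1:31am}, see also \cite[p.~158]{G1}) gives, with the interchange of sum and integral justified as there, the identity
\[
\sum_{0<\gamma,\gamma'\le T}\widehat{k_n}\big((\gamma-\gamma')\log x\big)\,w(\gamma-\gamma')=\frac{T\log T}{2\pi}\int_{-\infty}^{\infty}r(\alpha)\,F(\alpha)\,\d\alpha,\qquad r(\alpha)=\frac{1}{2\pi\beta}\,k_n\!\Big(\frac{\alpha}{2\pi\beta}\Big).
\]
Using that $k_n$ and $F$ are even and collecting the prefactors, this identifies the main contribution to $R_n(T^{\beta},T)$ as
\[
\frac{T}{2\pi^2\beta^{2n+2}(\log T)^{2n}}\int_{0}^{\infty}k_n\!\Big(\frac{\alpha}{2\pi\beta}\Big)F(\alpha)\,\d\alpha ,
\]
so the remaining task is to evaluate this $\alpha$-integral.

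To evaluate it, I would split $\int_0^\infty=\int_0^\beta+\int_\beta^1+\int_1^\infty$ at the two natural break points: $\alpha=\beta$, where by \eqref{18_30} the factor $k_n(\alpha/(2\pi\beta))$ changes from $g_n^2(\alpha/\beta)$ to $\beta^{2n+2}\alpha^{-2n-2}$, and $\alpha=1$, the edge of Montgomery's range. On $[0,1]$ I substitute his asymptotic $F(\alpha)=(1+o(1))\,T^{-2\alpha}\log T+\alpha+o(1)$. The term $T^{-2\alpha}\log T$ concentrates at $\alpha=0$; since $\int_0^{\infty}T^{-2\alpha}\log T\,\d\alpha=\tfrac12$ and $g_n(0)=2\mu_n$ by Lemma~\ref{lem:taylor}(III), its only surviving contribution is $\tfrac12 g_n^2(0)=2\mu_n^2$, producing the term $2\mu_n^2/\beta^{2n+2}$. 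The linear term $\alpha$ contributes $\int_0^\beta\alpha\,g_n^2(\alpha/\beta)\,\d\alpha=\beta^2 A_n$ on $[0,\beta]$ by \eqref{22_24}, and $\beta^{2n+2}\int_\beta^1\alpha^{-2n-1}\,\d\alpha=\tfrac{1}{2n}(\beta^2-\beta^{2n+2})$ on $[\beta,1]$; the tail gives $\beta^{2n+2}\int_1^\infty F(\alpha)\,\alpha^{-2n-2}\,\d\alpha$. Dividing by $\beta^{2n+2}$, the three groups reassemble precisely into
\[
\Big(A_n+\frac{1}{2n}\Big)\frac{1}{\beta^{2n}}+\Big(\int_1^\infty\frac{F(\alpha)}{\alpha^{2n+2}}\,\d\alpha-\frac{1}{2n}\Big)+\frac{2\mu_n^2}{\beta^{2n+2}},
\]
which is the bracket in the statement.

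The genuinely delicate part, and the one I expect to be the main obstacle, is to make every approximation quantitative so as to land on the error term $O\big(T\sqrt{\log\log T}/(\log T)^{2n+1/2}\big)$. Three sources must be controlled uniformly in $T$: (i) the weight-removal error $\sum_{0<\gamma,\gamma'\le T}\widehat{k_n}((\gamma-\gamma')\log x)\,(1-w(\gamma-\gamma'))$, which I would bound using $|\widehat{k_n}(y)|\ll\min\{1,|y|^{-2}\}$ from \eqref{3_20}, the density bound $O(\log T)$ for zeros in a unit interval, and the suppression of the diagonal by $1-w(u)=u^2/(4+u^2)$; (ii) the error from replacing $F$ by its asymptotic on $[0,1]$, which is sharpest near $\alpha=0$, where $T^{-2\alpha}\log T$ is of size $\log T$ and the relative $o(1)$ must be integrated with care; and (iii) the contribution of $T^{-2\alpha}\log T$ on $[\beta,1]$, which is $O(T^{-2\beta}\log T)$ and hence harmless. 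I expect the optimal control of (i)--(ii) to rest on a Cauchy--Schwarz estimate involving $\int_0^1 F(\alpha)\,\d\alpha$-type quantities, and it is exactly there that the factor $\sqrt{\log\log T}$ enters, matching the error term in Theorem~\ref{thm:main}; by contrast, the error $O(\log^3 T/(\log x)^{2n})=O((\log T)^{3-2n})$ coming from Lemma~\ref{17_8_1:31am} is negligible against the main term of size $T/(\log T)^{2n}$.
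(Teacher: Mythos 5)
Your proposal is correct and follows essentially the same route as the paper: Lemma \ref{17_8_1:31am}, insertion of the weight $w$ via the classical argument, conversion of the weighted pair sum to $\frac{T\log T}{(2\pi)^2\beta}\int F(\alpha)\,k_n\big(\frac{\alpha}{2\pi\beta}\big)\d\alpha$, the split at $\alpha=\beta$ and $\alpha=1$, and the Goldston--Montgomery asymptotic for $F$ on $[0,1]$, with identical evaluations of the three pieces. The only inessential deviation is your speculation that the factor $\sqrt{\log\log T}$ requires a further Cauchy--Schwarz step: in the paper it enters directly because the $o(1)$ in \eqref{0_29} is $O\big(\sqrt{\log\log T/\log T}\,\big)$ (Goldston--Montgomery), which multiplied against the main term of size $T/(\log T)^{2n}$ already yields the stated error.
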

\begin{proof} Let us analyze the main term in \eqref{20_39}. The estimate \eqref{3_20} and a classical argument \cite[p. 161]{G1} imply that
\begin{align} \label{18_22_01_04}
\displaystyle\sum_{0<\gamma,\gamma'\leq T}\widehat{k_{n}}((\gamma-\gamma')\log x) = \displaystyle\sum_{0<\gamma,\gamma'\leq T}\widehat{k_{n}}((\gamma-\gamma')\log x)\,w(\gamma-\gamma')+O(T),
\end{align}
where $w(u)=4/(4+u^2)$. Letting $x=T^\beta$, from \eqref{1_4} one can see that
\begin{align} \label{18_25_01_04}
\displaystyle\sum_{0<\gamma,\gamma'\leq T}\widehat{k_{n}}((\gamma-\gamma')\log x)\,w(\gamma-\gamma')= \dfrac{T\log T}{(2\pi)^2\beta}\int_{-\infty}^{\infty}F(\alpha)\,k_n\bigg(\dfrac{\alpha}{2\pi\beta}\bigg)\d \alpha.
\end{align}
To evaluate the integral on the right-hand side of \eqref{18_25_01_04}, we use the fact that $F(\alpha)$ is even and we split this integral into the intervals $[0,\beta], \, [\beta,1]$ and $[1,\infty)$. Moreover, we calculate these integrals using the asymptotic formula\footnote{\,\,\, This result is due to Goldston and Montgomery \cite[Lemma 8]{GM}, refining the original work of Montgomery \cite{M}.} for $F(\alpha)$: As $T\to\infty$, we have
\begin{align} \label{0_29}
F(\alpha) = \left(\alpha +T^{-2\alpha}\log T\right)(1 + o(1)),
\end{align}
uniformly for $0\leq \alpha \leq 1$, where $o(1)= O\Big(\sqrt{\frac{\log\log T}{\log T}}\Big).$  \\
\noindent 1. {\it  On the interval $[0,\beta]$}: Note that, using (I) from Lemma \ref{lem:taylor}, we have that $g^2_n(\alpha)=g^2_n(0)+O(\alpha^2)$ for $\alpha\in[0,1]$. Then, using \eqref{18_30}, \eqref{0_29} and the fact that $\beta\int_{0}^1T^{-2\beta\alpha}\log T\,\d\alpha=\frac{1}{2}+O\left(\frac{1}{
\log^2T}\right)$, we get
\begin{align*}
\int_0^\beta F(\alpha)& k_{n}\bigg(\dfrac{\alpha}{2\pi\beta}\bigg)\d \alpha = \bigg(\beta^2\int_0^1
\alpha\,g^2_n(\alpha)\,\d \alpha+ \dfrac{g^2_n(0)}{2}+O\bigg(\dfrac{1}{\log ^2T}\bigg)\bigg)(1+o(1)).
\end{align*}
We remark, by (III) from Lemma \ref{lem:taylor}, that $
g^2_n(0)=4\mu^2_n$. \\
\noindent 2. {\it  On the interval $[\beta,1]$}: Here, by \eqref{0_29}, $F(\alpha)=\alpha+o(1)$. Then, we handle this integral using \eqref{18_30} to get
\begin{align*}
\int_{\beta}^{1}F(\alpha)\,k_{n}\bigg(\dfrac{\alpha}{2\pi\beta}\bigg)\d \alpha& =  \int_{\beta}^{1}(\alpha+o(1))\bigg(\dfrac{\beta}{\alpha}\bigg)^{2n+2}\d \alpha= \dfrac{1}{2n}\beta^2 - \dfrac{1}{2n}\beta^{2n+2}+ o(1).
\end{align*}
\noindent 3. {\it  On the interval $[1,\infty)$}: In this case we write
\begin{align*}
\int_{1}^{\infty}F(\alpha)\,k_{n}\bigg(\dfrac{\alpha}{2\pi\beta}\bigg)\d \alpha& =  \int_{1}^{\infty}F(\alpha)\bigg(\dfrac{\beta}{\alpha}\bigg)^{2n+2}\d \alpha= \beta^{2n+2}\int_{1}^{\infty}\dfrac{F(\alpha)}{\alpha^{2n+2}}\,\d \alpha.
\end{align*}
Finally, inserting the above estimates in \eqref{18_25_01_04} and combining with \eqref{18_22_01_04} and Lemma \ref{17_8_1:31am}, we conclude the desired result.
\end{proof}

\bigskip

\section{Asymptotic formulas for $G_n(x,T)$ and $H_n(x,T)$: The sum over the prime numbers}

\subsection{The terms $G_n(x,T)$ and $H_n(x,T)$} We recall that, for $T\geq 3$ and $x\geq 4$, we have defined
\[
G_n(x,T)= \frac{1}{\pi^2}\int_1^T
\Bigg|\displaystyle\sum_{m\leq x}\im\{i^{n}m^{-it}\}\dfrac{\Lambda(m)}{\sqrt{m}(\log m)^{n+1}}\,f_{n}\bigg(\dfrac{\log m}{\log x}\bigg)
\Bigg|^2\d t
\]
and
\begin{align}  \label{14_52}
H_{n}(x,T) =\dfrac{2}{\pi}\displaystyle\sum_{m\leq x}\Bigg(\int_{1}^TS_{n}(t)\,\im\{i^{n}m^{-it}\}\,\d t\Bigg)\dfrac{\Lambda(m)}{\sqrt{m}(\log m )^{n+1}}\,f_{n}\bigg(\dfrac{\log m}{\log x}\bigg).
\end{align} 
We can get the following expression for $G_n(x,T)$ using similar computations as Goldston.
\begin{lemma}\label{lem:dirPol}
	For $T\geq 3$ and $x\geq 4$, we have that
	\begin{equation*}
	G_n(x,T) = \frac{T}{2\pi^2}
	\displaystyle\sum_{m\leq x}\dfrac{\Lambda^2(m)}{m(\log m)^{2n+2}}\,f^2_{n}\bigg(\dfrac{\log m}{\log x}\bigg) + O(x^{2}).
	\end{equation*}
\end{lemma}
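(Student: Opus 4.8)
The plan is to treat $G_n(x,T)$ as the mean square of a Dirichlet polynomial over primes and to exploit the standard orthogonality of the functions $m^{-it}$ on a long interval. First I would write $\im\{i^n m^{-it}\}$ in terms of $m^{it}$ and $m^{-it}$, so that the integrand becomes a double sum over $m,m'\le x$. Squaring gives a product of two such sums, and after expanding $\im\{i^n m^{-it}\}\,\im\{i^n (m')^{-it}\}$ we land on terms of the form $(m/m')^{\pm it}$. Integrating in $t$ over $[1,T]$, the diagonal terms $m=m'$ contribute the main term, while the off-diagonal terms $m\neq m'$ produce oscillatory integrals $\int_1^T (m/m')^{it}\d t$ that are $O(1/|\log(m/m')|)$.

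\textbf{Extracting the diagonal and estimating the off-diagonal.}
For the diagonal, each $m=m'$ contributes $\frac{T}{2}$ (after averaging the $\im\{i^n m^{-it}\}^2=\tfrac12(1-\re\{i^{2n}m^{-2it}\})$, whose constant part integrates to $T/2$), yielding
\begin{align*}
\frac{T}{2\pi^2}\sum_{m\le x}\frac{\Lambda^2(m)}{m(\log m)^{2n+2}}\,f_n^2\!\bigg(\frac{\log m}{\log x}\bigg),
\end{align*}
which is exactly the claimed main term. The oscillatory part $\re\{i^{2n}m^{-2it}\}$ on the diagonal integrates to $O(1)$ per term, contributing a total that is absorbed into the error after summing the coefficients. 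For the off-diagonal $m\neq m'$, I would bound $\big|\int_1^T (m/m')^{it}\d t\big|\ll |\log(m/m')|^{-1}\ll \frac{\sqrt{mm'}}{|m-m'|}$ and pair this with the coefficient sizes $\frac{\Lambda(m)\Lambda(m')}{\sqrt{mm'}(\log m)^{n+1}(\log m')^{n+1}}|f_n|\,|f_n|$; using $\Lambda(m)\le\log m$ and $|f_n|\ll 1$ this is $\ll \sum_{m\ne m'\le x}\frac{1}{|m-m'|}$, which after standard estimation is $O(x^2)$ — this is the source of the stated error term.

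\textbf{The main obstacle.}
The genuinely delicate point is controlling the off-diagonal sum $\sum_{m\ne m'\le x}|\log(m/m')|^{-1}$ cleanly enough to reach $O(x^2)$ rather than a weaker bound like $O(x^2\log x)$. The worst contribution comes from pairs $m,m'$ that are close together, where $|\log(m/m')|^{-1}\approx \frac{m}{|m-m'|}$ is largest; one must sum $\frac{1}{|m-m'|}$ over $m'$ near $m$, which gives a factor $O(\log x)$ per $m$, and then sum over $m\le x$. The coefficient decay $(\log m)^{-(n+1)}$ and the normalization do not help here, so one must be careful that the factors of $\sqrt{m}$ in the coefficients exactly cancel against the $\sqrt{mm'}$ from the oscillatory-integral bound and that the remaining sum is genuinely $O(x^2)$; this is precisely the step where Goldston's argument for the $n=0$ case carries over, and I would follow that computation, noting that the extra logarithmic factors $(\log m)^{-(n+1)}$ and the presence of $f_n$ only improve the constants and cause no new difficulty.
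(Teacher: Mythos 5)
Your proposal is correct and is essentially the intended argument: the paper offers no proof of its own, deferring to Goldston \cite[pp.~164--165]{G1}, whose computation is exactly this expansion of the mean square of the Dirichlet polynomial into a diagonal main term (via $\im\{i^n m^{-it}\}^2=\tfrac12\big(1-\re\{i^{2n}m^{-2it}\}\big)$) plus off-diagonal terms bounded through $\int_1^T (m/m')^{it}\,\dt \ll |\log(m/m')|^{-1}$, and you have reconstructed it faithfully, including the key observation that the $\sqrt{mm'}$ from the near-diagonal bound cancels the $\sqrt{m}$ factors in the coefficients. One minor patch: the inequality $|\log(m/m')|^{-1}\ll \sqrt{mm'}/|m-m'|$ holds only for comparable pairs, say $m'\le m\le 2m'$ (it fails when $m/m'$ is large), but for the remaining pairs $|\log(m/m')|^{-1}\ll 1$ and the trivial bound $\big(\sum_{m\le x}m^{-1/2}\big)^2\ll x$ applies, so your conclusion --- in fact $O(x\log x)$, comfortably inside the stated $O(x^2)$ --- stands.
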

\begin{proof}
See \cite[pp. 164-165]{G1}.
\end{proof}

The expression for $H_n(x,T)$ is more subtle, since it requires some modification to the computations of Titchmarsh \cite{Tit2} that arises when $n\ge 1$.

\begin{lemma} \label{lem:intermediate} Assume the Riemann Hypothesis. Then, for $T\geq 3$ and $x\geq 4$, we have 
$$H_{n}(x,T) =\dfrac{T}{\pi^2}\displaystyle\sum_{m\leq x}\dfrac{\Lambda^2(m)}{m(\log m )^{2n+2}}\,f_{n}\bigg(\dfrac{\log m}{\log x}\bigg)+ O(x^2\log T).
$$
\end{lemma}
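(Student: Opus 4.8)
The goal is to evaluate $H_n(x,T) = \frac{2}{\pi}\sum_{m\leq x}\big(\int_1^T S_n(t)\,\im\{i^n m^{-it}\}\,\d t\big)\,\frac{\Lambda(m)}{\sqrt{m}(\log m)^{n+1}}\,f_n(\tfrac{\log m}{\log x})$, and to show it agrees with twice the diagonal contribution from $G_n(x,T)$ but with $f_n$ instead of $f_n^2$. The key observation is that the outer factor $\frac{\Lambda(m)}{\sqrt m (\log m)^{n+1}}f_n(\cdot)$ is benign, so everything reduces to evaluating the inner integral
\[
J_m := \int_1^T S_n(t)\,\im\{i^n m^{-it}\}\,\d t
\]
for each prime power $m\leq x$. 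I expect $J_m$ to produce a main term of size $\approx \frac{T}{2\pi}\cdot\frac{\Lambda(m)}{\sqrt m(\log m)^{n+1}}f_n(\cdot)$ (so that, summed against the outer factor, one obtains the stated $\frac{T}{\pi^2}\sum \frac{\Lambda^2(m)}{m(\log m)^{2n+2}}f_n$), plus an error that is $O(x)$ or so after summation.

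\emph{Evaluating the inner integral.}
First I would use $S_n(t) = S'_{n+1}(t)$ and integrate by parts, writing $\im\{i^n m^{-it}\} = \im\{i^n e^{-it\log m}\}$. Since $\frac{\d}{\d t}\,\im\{i^n m^{-it}\} = -\log m\,\im\{i^{n}\cdot(-i)\,m^{-it}\} = \log m\,\re\{\cdots\}$-type expression, integration by parts converts the integral involving $S_n$ into one involving $S_{n+1}(t)$ against an oscillatory factor with an extra $\log m$. Using the Littlewood bound $S_{n+1}(t) = O\big(\log t/(\log\log t)^{n+2}\big)$ from \eqref{1_12} for the boundary terms, these are negligible after being weighted and summed. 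The surviving integral should be handled by Titchmarsh's method: one replaces $S_n(t)$ (equivalently, the integrated Riemann--von Mangoldt formula) by its explicit-formula expansion, and isolates the term coming from the \emph{same} prime power $m$, which is the diagonal and gives the main term; the off-diagonal prime pairs and the contribution of the smooth $N$-counting part contribute to the error.

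\emph{The main obstacle.}
The hard part will be controlling the error term at the level claimed, $O(x^2\log T)$, while correctly extracting the main term with the single power $f_n$. Specifically, when one expresses $S_n(t)$ via an approximate explicit formula over primes and integrates against $\im\{i^n m^{-it}\}$, the diagonal term (same prime power) must be shown to produce exactly $\frac{T}{2\pi}\,\frac{\Lambda(m)}{\sqrt m(\log m)^{n+1}}f_n(\tfrac{\log m}{\log x})$ with the \emph{correct constant and correct power of $f_n$}; the subtlety is that this comes from the product of the oscillatory factor $m^{-it}$ with the \emph{same} frequency in the representation of $S_n(t)$, and the factor $f_n$ (not $f_n^2$) arises because only one copy of $f_n$ sits inside $S_n$'s prime sum while the outer $f_n$ comes from the definition of $H_n$. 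I would track the parity-dependent signs $i^n$ carefully, since $\im\{i^n m^{-it}\}$ picks out either the real or imaginary part of the prime-sum representation depending on the parity of $n$, and this is exactly where Titchmarsh's argument needs modification for $n\geq 1$. The off-diagonal terms $m\neq m'$ oscillate and integrate to $O(\sqrt{mm'}/|\log(m/m')|)$, which after summation over $m,m'\leq x$ yields the $O(x^2\log T)$ error; confirming this bound uniformly, together with the negligibility of the boundary and non-prime contributions, is the technical crux.
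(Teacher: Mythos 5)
Your plan founders at the main term of the inner integral $J_m=\int_1^T S_n(t)\,\im\{i^n m^{-it}\}\,\d t$. You claim $J_m$ has main term $\frac{T}{2\pi}\frac{\Lambda(m)}{\sqrt m(\log m)^{n+1}}f_n\big(\frac{\log m}{\log x}\big)$; but multiplied by the outer weight $\frac{2}{\pi}\frac{\Lambda(m)}{\sqrt m(\log m)^{n+1}}f_n\big(\frac{\log m}{\log x}\big)$ from the definition \eqref{14_52} of $H_n$, this gives $\frac{T}{\pi^2}\sum_{m\le x}\frac{\Lambda^2(m)}{m(\log m)^{2n+2}}f_n^2$ --- the \emph{square}, not the single power asserted in the lemma, so your parenthetical claim that it produces the stated formula is an algebraic slip. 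Were $H_n$ to carry $f_n^2$, one would have $H_n\approx 2G_n$ by Lemma \ref{lem:dirPol}, hence $H_n-G_n\approx G_n$, and the crucial cancellation $2f_n-f_n^2=1-(1-f_n)^2$ behind Proposition \ref{lem:gPlusH} --- the very source of the function $g_n$ and of the second-order term in Theorem \ref{thm:main} --- would disappear. The correct statement is $J_m=\frac{\Lambda(m)\,T}{2\pi\sqrt m(\log m)^{n+1}}+O(m^{3/2}\log T)$, with \emph{no} factor $f_n$ at all. Your heuristic manufactures the spurious $f_n$ because you extract the diagonal from the $x$-truncated prime sum in the representation of $S_n(t)$ (Lemma \ref{lem:repres}) and discard the zero-sum term $I_{2,n}$; but the correlation of the zero sum with $m^{-it}$ is not an error term --- it contributes exactly the missing $\frac{T}{2\pi}\frac{\Lambda(m)}{\sqrt m(\log m)^{n+1}}\big(1-f_n\big(\frac{\log m}{\log x}\big)\big)$, at the same order as your putative main term, so it cannot be swept into the off-diagonal error.

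The paper avoids the truncated representation here altogether: it integrates \eqref{Lem1_eq_1} by parts in $\sigma$ to write $S_n(t)=\frac{1}{\pi}\,\im\big\{\frac{i^n}{(n-1)!}\int_{1/2}^\infty(\sigma-1/2)^{n-1}\log\zeta(\sigma+it)\,\d\sigma\big\}$, expands $\im\{i^n m^{-it}\}=\frac{(-1)^n i^{n+1}}{2}\big(m^{it}+(-1)^{n+1}m^{-it}\big)$, and evaluates $\int_0^T\log\zeta(\sigma+it)\,m^{\pm it}\,\d t$ by Cauchy's theorem on the rectangle with vertices $\sigma$, $2$, $2+iT$, $\sigma+iT$ (Titchmarsh's method), getting $\frac{\Lambda(m)}{m^\sigma\log m}T+O(m^{2-\sigma}\log T)$ for the $m^{it}$ integral and only $O(\log T)$ for the $m^{-it}$ integral; integrating in $\sigma$ against $(\sigma-1/2)^{n-1}$ then yields $J_m$ exactly, and the single $f_n$ in the lemma enters solely through the definition of $H_n$. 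Your instinct about parity is sound --- the presence of both $m^{it}$ and $m^{-it}$ forces control of $\re\log\zeta$ as well as $\im\log\zeta$, which is precisely the modification of Titchmarsh needed for $n\ge 1$ --- and your error accounting is compatible with $O(x^2\log T)$; but unless you replace the diagonal-extraction step by an exact evaluation of $J_m$ (or prove that the zero-sum correlation supplies exactly the factor $1-f_n$), the proposal does not prove the lemma. The detour through $S_n=S_{n+1}'$ and integration by parts in $t$ is likewise unnecessary once $J_m$ is computed by contour integration.
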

\begin{proof} First, let us calculate the integral inside of \eqref{14_52}. Using integration by parts in \eqref{Lem1_eq_1}, it follows that, for $t>0$,
	\begin{equation*}
	S_{n}(t) = \frac{1}{\pi} \,\,\im{\left\{\dfrac{i^{n}}{(n-1)!}\int_{1/2}^{\infty}{\left(\sigma-1/2\right)^{n-1}\,\log\zeta(\sigma+it)}\,\d \sigma\right\}}.
	\end{equation*}
Then, using the identity
$$
\im\{i^{n}m^{-it}\}=\dfrac{(-1)^ni^{n+1}}{2}(m^{it}+(-1)^{n+1}m^{-it}),
$$
and Fubini's theorem, we get 
\begin{align}  \label{17_08}
\begin{split} 
\int_{1}^TS_{n}(t)\,& \im\{i^{n}m^{-it}\}\,\d t \\
& =\dfrac{1}{2\pi(n-1)!} \int_{1/2}^{\infty}\left(\sigma-1/2\right)^{n-1}\re\bigg\{\int_{0}^T\log\zeta(\sigma+it)(m^{it}+(-1)^{n+1}m^{-it})\,\d t\bigg\}\,\d \sigma + O(1).
\end{split}
\end{align}
Now, we compute the integral from $0$ to $T$, following the idea in \cite[Lemma $\gamma$]{Tit2}. Let $m\geq 2$ be a natural number and $\hh<\sigma<2$. Consider the integral
$$
\int_{\partial R}\log\zeta(s)\,m^s\d s,
$$
where $R$ is the rectangle with vertices $\sigma$, $2$, $2+iT$ and $\sigma+iT$ with suitable indentations to exclude the point $s=1$. The function $\log\zeta(s)$ is analytic inside the contour $R$, and the radii of $s=1$ may be made to tend to zero. Then, using Cauchy's theorem we have that
\begin{align*}
i\int_{0}^T\log\zeta(\sigma+it)\,m^{\sigma+it}\d t = \int_{\sigma}^2\log\zeta(\alpha)\,m^\alpha \,\d \alpha + i\int_{0}^T\log\zeta(2+it)\,m^{2+it}\d t - \int_{\sigma}^2\log\zeta(\alpha+iT)\,m^{\alpha+iT}\d \alpha.
\end{align*}
Note that
$\int_{\sigma}^2\log\zeta(\alpha)\,m^\alpha \,\d \alpha = O(m^2)$. Then, by \cite[Lemmas $\alpha$ and $\beta$]{Tit2} we get that 
\begin{align} \label{18_26_05_04}
\int_{0}^T\log\zeta(\sigma+it)m^{it}\d t=\dfrac{\Lambda(m)}{m^{\sigma}\log m}T + O(m^{2-\sigma}\log T).
\end{align} 
Similarly, using the integral
$$
\int_{\partial R}\log\zeta(s)\,m^{-s}\d s,
$$
around the same contour, it follows that
\begin{align} \label{18_26_05_05}
\int_{0}^T\log\zeta(\sigma+it)m^{-it}\d t=O(\log T).
\end{align} 
Therefore, combining \eqref{18_26_05_04} and \eqref{18_26_05_05}, we get for $\hh<\sigma<2$ that
\begin{align} \label{21_21_05_05}
\int_{0}^T\log\zeta(\sigma+it)\big(m^{it}+(-1)^{n+1}m^{-it}\big)\d t=\dfrac{\Lambda(m)}{m^{\sigma}\log m}T + O(m^{2-\sigma}\log T).
\end{align}
On the other hand, using the expansion of the logarithm of $\zeta(s)$ and Fubini's theorem, we have for $\sigma\geq 2$ that
\begin{align} \label{21_21_05_06}
\begin{split}
\int_{0}^T\log\zeta(\sigma+it)\big(m^{it}+(-1)^{n+1}m^{-it}\big)\d t & =\displaystyle\sum_{k\geq 2} \dfrac{\Lambda(k)}{k^\sigma\log k}\int_{0}^T\dfrac{m^{it}+(-1)^{n+1}m^{-it}}{k^{it}}\d t\\
& = \dfrac{\Lambda(m)}{m^\sigma\log m}\int_{0}^T\big(1+(-1)^{n+1}m^{-2it}\big)\d t \\
& \,\,\,\,\,\,\, +\displaystyle\sum_{\substack{k\geq 2\\k\neq m}} \dfrac{\Lambda(k)}{k^\sigma\log k}\int_{0}^T\bigg(\bigg(\dfrac{m}{k}\bigg)^{it}+(-1)^{n+1}(mk)^{-it}\bigg)\d t \\
& =\dfrac{\Lambda(m)}{m^\sigma\log m}\,T + O\Bigg(\displaystyle\sum_{k\geq 2}\dfrac{1}{k^\sigma}\Bigg) + O\left(\displaystyle\sum_{\substack{k\geq 2 \\ k\neq m}} \dfrac{1}{k^\sigma|\log(m/k)|}\right)\\
& =\dfrac{\Lambda(m)}{m^\sigma\log m}\,T + O\bigg(\dfrac{1}{2^\sigma}\bigg),
\end{split}
\end{align} 
where in the last sum we have used that  $\sum_{\substack{k\geq 2\\ k\neq m}} \frac{1}{{k^2|\log(m/k)|}}$ is bounded (see \cite[p. 451]{Tit2}). Therefore, inserting \eqref{21_21_05_05} and \eqref{21_21_05_06} in \eqref{17_08} and using \cite[Eq. 3.351-3]{GR} we have
\begin{align*} 
\int_{1}^TS_{n}(t)\,\im\{i^{n}m^{-it}\}\,\d t  =\dfrac{\Lambda(m)\,T}{2\pi\sqrt{m}(\log m)^{n+1}} + O(m^{3/2}\log T).
\end{align*}
Inserting it in \eqref{14_52} we get
\begin{align*}  
H_{n}(x,T) =\dfrac{T}{\pi^2}\displaystyle\sum_{m\leq x}\dfrac{\Lambda^2(m)}{m\,(\log m )^{2n+2}}\,f_{n}\bigg(\dfrac{\log m}{\log x}\bigg)+ O\Bigg(\displaystyle\sum_{m\leq x}\dfrac{m\,\Lambda(m)}{(\log m )^{n+1}}\bigg|f_{n}\bigg(\dfrac{\log m}{\log x}\bigg)\bigg|\log T\Bigg).
\end{align*} 
Finally, using the bound $|f_n(y)|\ll1$ for $y\in(0,1]$ in the error term, we get
\begin{align*}
\Bigg|\displaystyle\sum_{m\leq x}\dfrac{m\,\Lambda(m)}{(\log m )^{n+1}}\,f_{n}\bigg(\dfrac{\log m}{\log x}\bigg)\Bigg| \ll \displaystyle\sum_{m\leq x}m\ll x^2.
\end{align*}
\end{proof}

\subsection{The power of cancelation in $H_n(x,T)-G_n(x,T)$} Here, we will obtain the asymptotic behavior for the difference $H_n(T^\beta,T)-G_n(T^\beta,T)$, as $T\to\infty$. It is possible to obtain asymptotic formulas for $H_n(T^\beta,T)$ and $G_n(T^\beta,T)$ independently, as we did in Proposition \ref{lem:R} for $R_n(T^\beta,T)$. However, the expressions are much more complicated, so we will take advantage of a surprising cancellation in their difference.

\begin{proposition}\label{lem:gPlusH} 
Let $0<\beta<\hh$ be a fixed number. Assume the Riemann Hypothesis. Then,
\begin{equation*}
    H_n(T^\beta,T)-G_n(T^\beta,T)=\frac{T}{2\pi^2}\displaystyle\sum_{m=2}^\infty\dfrac{\Lambda^2(m)}{m(\log m)^{2n+2}}  - \frac{T}{2\pi^2\beta^{2n}\left(\log T\right)^{2n}}\left[A_n+\frac{1}{2n}\right] + O\bigg(\frac{T}{(\log T)^{2n+1}}\bigg),
\end{equation*}
as $T\to\infty$, where $A_n$ is defined as in \eqref{22_24}.
\end{proposition}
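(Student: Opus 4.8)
The plan is to combine the two preceding lemmas and then exploit an algebraic identity relating $f_n$ and $g_n$. Inserting the formulas for $H_n$ and $G_n$ from Lemmas \ref{lem:intermediate} and \ref{lem:dirPol}, and recalling that $x=T^\beta$ with $0<\beta<\tfrac12$, so that the error terms $O(x^2\log T)=O(T^{2\beta}\log T)$ are $o\big(T/(\log T)^{2n+1}\big)$, I would first write
\begin{align*}
H_n(x,T)-G_n(x,T)=\frac{T}{2\pi^2}\sum_{m\le x}\frac{\Lambda^2(m)}{m(\log m)^{2n+2}}\Big(2f_n(y_m)-f_n^2(y_m)\Big)+O\big(T^{2\beta}\log T\big),
\end{align*}
where $y_m=\tfrac{\log m}{\log x}\in(0,1]$. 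The key point is the \emph{surprising cancellation}: writing $2f_n-f_n^2=1-(1-f_n)^2$ and using the relation $1-f_n(y)=y^{n+1}g_n(y)$ from \eqref{21_54}, one gets $2f_n(y_m)-f_n^2(y_m)=1-y_m^{2n+2}g_n^2(y_m)$, so that each summand splits as
\begin{align*}
\frac{\Lambda^2(m)}{m(\log m)^{2n+2}}-\frac{\Lambda^2(m)}{m(\log x)^{2n+2}}\,g_n^2\!\Big(\frac{\log m}{\log x}\Big).
\end{align*}

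I would then evaluate the two resulting sums separately. For the first, I extend the summation to infinity, producing the main term $\frac{T}{2\pi^2}\sum_{m\ge2}\frac{\Lambda^2(m)}{m(\log m)^{2n+2}}=\frac{C_n}{2\pi^2}\,T$, and estimate the tail $\sum_{m>x}\frac{\Lambda^2(m)}{m(\log m)^{2n+2}}$. Separating the prime contributions ($m=p$) from the higher prime-power ones ($m=p^k$, $k\ge2$), I expect the latter to be $O\big(x^{-1/2}(\log x)^{-2n-1}\big)$, hence after multiplication by $T$ of size $O\big(T^{1-\beta/2}(\log T)^{-2n-1}\big)=o\big(T/(\log T)^{2n+1}\big)$. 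For the prime part, partial summation against Mertens' estimate $\sum_{p\le y}\tfrac{\log p}{p}=\log y+O(1)$ gives $\sum_{p>x}\frac{1}{p(\log p)^{2n}}=\frac{1}{2n(\log x)^{2n}}+O\big((\log x)^{-2n-1}\big)$. Thus the first sum contributes $\frac{C_n}{2\pi^2}T-\frac{T}{2\pi^2}\cdot\frac{1}{2n(\log x)^{2n}}+O\big(T(\log x)^{-2n-1}\big)$.

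For the second sum I again use partial summation, now against the standard estimate $\sum_{m\le y}\frac{\Lambda^2(m)}{m}=\frac{(\log y)^2}{2}+O(\log y)$, whose main term comes from $\sum_{p\le y}\frac{(\log p)^2}{p}$, the prime-power terms contributing only $O(1)$. The change of variables $u=\frac{\log y}{\log x}$ turns the leading contribution into $(\log x)^2\int_0^1 u\,g_n^2(u)\,\du=(\log x)^2 A_n$, while the $O(\log y)$ fluctuation produces $O(\log x)$. Hence $\frac{1}{(\log x)^{2n+2}}\sum_{m\le x}\frac{\Lambda^2(m)}{m}g_n^2\!\big(\tfrac{\log m}{\log x}\big)=\frac{A_n}{(\log x)^{2n}}+O\big((\log x)^{-2n-1}\big)$. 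Substituting $\log x=\beta\log T$ everywhere and collecting terms yields the stated formula, with $\tfrac{1}{2n}$ and $A_n$ combining inside the bracket.

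I expect the main obstacle to be twofold. First, one must spot and verify the algebraic identity $2f_n-f_n^2=1-y^{2n+2}g_n^2(y)$, which is what produces the cancellation, brings $g_n$ (and therefore $A_n$) into play, and makes the quantity $A_n+\tfrac{1}{2n}$ here match exactly the term appearing in Proposition \ref{lem:R}, so that it cancels in the final proof of Theorem \ref{thm:main}. Second, the two partial-summation arguments require careful bookkeeping: one must control the higher prime-power terms and the Mertens-type error terms finely enough to keep every error within the claimed $O\big(T/(\log T)^{2n+1}\big)$, and in particular verify that the choice $\beta<\tfrac12$ renders the $O(T^{2\beta}\log T)$ and higher prime-power contributions negligible.
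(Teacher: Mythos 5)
Your proposal is correct and matches the paper's proof essentially step for step: the same completion of the square $2f_n-f_n^2=1-(1-f_n)^2$ combined with the identity $1-f_n(y)=y^{n+1}g_n(y)$ from \eqref{21_54}, the same splitting into the full series minus a tail together with the $g_n^2$-weighted sum, and the same partial-summation evaluations against $\sum_{p\le y}\log^2p/p=\tfrac12\log^2y+O(\log y)$ leading to $A_n(\log x)^2+O(\log x)$. The only (harmless) deviation is in the tail $\sum_{m>x}\Lambda^2(m)/\big(m(\log m)^{2n+2}\big)$, where the paper invokes its RH-conditional Lemma \ref{boundforM} to obtain the error $O\big(x^{-1/2}(\log x)^{-(2n-1)}\big)$, whereas your unconditional Mertens-based argument yields only $O\big((\log x)^{-2n-1}\big)$ --- which, after setting $\log x=\beta\log T$, still lands within the claimed $O\big(T/(\log T)^{2n+1}\big)$.
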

\begin{proof}
Using Lemmas \ref{lem:dirPol} and \ref{lem:intermediate}, and completing the square, we get for $x=T^\beta$,
	\begin{align} \label{eq:gPlusH1}
\begin{split}
H_n(T^\beta,T)& -G_n(T^\beta,T) \\
	&  = \frac{T}{2\pi^2}\displaystyle\sum_{m\leq x}\dfrac{\Lambda^2(m)}{m(\log m)^{2n+2}}\bigg[2f_{n}\bigg(\dfrac{\log m}{\log x}\bigg)-f^2_{n}\bigg(\dfrac{\log m}{\log x}\bigg)\bigg] + O\big(T^{2\beta}\log T\big)\\
	& = \frac{T}{2\pi^2}\displaystyle\sum_{m\leq x}\dfrac{\Lambda^2(m)}{m(\log m)^{2n+2}} - \frac{T}{2\pi^2}\displaystyle\sum_{m\leq x}\dfrac{\Lambda^2(m)}{m(\log m)^{2n+2}}\bigg[1-f_{n}\bigg(\dfrac{\log m}{\log x}\bigg)\bigg]^2+ O\big(T^{2\beta}\log T\big)\\
	&= \frac{T}{2\pi^2}\displaystyle\sum_{m\leq x}\dfrac{\Lambda^2(m)}{m(\log m)^{2n+2}} - \frac{T}{2\pi^2(\log x)^{2n+2}}\displaystyle\sum_{m\leq x}\dfrac{\Lambda^2(m)}{m}\,g^2_{n}\bigg(\dfrac{\log m}{\log x}\bigg)+O\big(T^{2\beta}\log T\big).
	\end{split}
	\end{align}
Using Lemma \ref{boundforM} and partial summation, it is clear that	
\begin{align} \label{21_26}
\begin{split}
\displaystyle\sum_{m\leq x}\dfrac{\Lambda^2(m)}{m(\log m)^{2n+2}} =\displaystyle\sum_{m=2}^\infty\dfrac{\Lambda^2(m)}{m(\log m)^{2n+2}} -\frac{1}{2n(\log x)^{2n}} +  O\left(\frac{1}{\sqrt{x}(\log x)^{2n-1}}\right).
\end{split}
\end{align}
Let us analyze the second term in \eqref{eq:gPlusH1}. By the estimate $|g_n(y)|\ll 1$ for $y\in[0,1]$, we get
\begin{align} \label{20_13}
\displaystyle\sum_{m\leq x}\dfrac{\Lambda^2(m)}{m}\,g^2_{n}\bigg(\dfrac{\log m}{\log x}\bigg) = \displaystyle\sum_{p\leq x}\dfrac{\log^2 p}{p}\,g^2_{n}\bigg(\dfrac{\log p}{\log x}\bigg) + O(1).
\end{align}
To analyze the sum over primes on the right-hand side of \eqref{20_13}, we use\footnote{\,\,\, This can be obtained using integration by parts in \cite[Theorem 2.7 (b)]{MV}.}
\[
P(y) = \sum_{p\le y}\frac{\log^2p}{p} = \dfrac{\log^2y}{2}+O(\log y),
\]
for $y\geq 2$. Then, using integration by parts and the bound $|g_n(y)\,g'_n(y)|\ll 1$ for $y\in[0,1]$ we get
\begin{align}  \label{21_27}
\begin{split} 
\displaystyle\sum_{p\leq x}\dfrac{\log^2 p}{p}\,g^2_{n}\bigg(\dfrac{\log p}{\log x}\bigg)& =  \int_{2^-}^{x^{+}}g^2_{n}\left( \frac{\log u}{\log x}
\right)\d P(u)= \left(\int_{0}^{1}\,\alpha\,g^2_{n}(\alpha)\,\d \alpha\right)\log^2x + O(\log x).
\end{split}
\end{align}
Therefore, combining \eqref{21_26}, \eqref{20_13} and \eqref{21_27} in \eqref{eq:gPlusH1}, we conclude the proof of the proposition.
\end{proof}

\section{Computing $C_n$ numerically}\label{sec:num}
In this section we study the series that appears in the main term. For each $n\geq 1$, let $C_n$ be the series defined in \eqref{1_00}, i.e.
\[C_n = \sum_{m=2}^\infty \frac{\Lambda^2(m)}{m\left(\log m\right)^{2n+2}}.
\]
Then, Theorem \ref{thm:main} implies that
\[\int_0^T |S_n(t)|^2 \d t \sim \dfrac{C_n}{2\pi^2}T.
\]
Clearly, $C_n$ satisfies the estimates
\begin{equation*}
	\frac{1}{2(\log 2)^{2n}}\le C_n \le \frac{1}{2(\log 2)^{2n}} + \frac{A}{(\log 3)^{2n}},
\end{equation*}
for some universal constant $A>0$. Since $\log 2 <1$, then $C_n\rightarrow \infty$ as $n\rightarrow\infty$, with
\[C_n\sim \frac{1}{2(\log 2)^{2n}}.
\]
Let us obtain numerical bounds for $C_n$. To do this, we calculate numerically the first $x_n$ terms of the series and obtain explicit bounds for the tail 
$$V_n(x) = \sum_{m>x} \frac{\Lambda^2(m)}{m\left(\log m\right)^{2n+2}}.
$$
\begin{lemma} \label{boundforM}
Assume the Riemann Hypothesis. Define
\[M(x):= \sum_{m\le x} \Lambda^2(m).
\]
Then, for all $x\ge 10^5$, 
\begin{equation}\label{eq:M(x)}
        -0.047\sqrt{x}(\log x)^3 \le M(x) -(x\log x -x)  \le 0.057\sqrt{x}(\log x)^3,
 \end{equation}
 and
 \begin{equation}\label{eq:tail}
	    -\frac{0.017n+0.167}{\sqrt{x}(\log x)^{2n-1}} \le V_n(x) - \frac{1}{2n(\log x)^{2n}} \le 
	\frac{0.020n+0.181}{\sqrt{x}(\log x)^{2n-1}}.
	\end{equation}
\end{lemma}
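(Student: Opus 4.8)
The plan is to prove the two estimates in sequence, since \eqref{eq:tail} follows from \eqref{eq:M(x)} by partial summation. For \eqref{eq:M(x)}, I would first reduce the counting of $\Lambda^2(m)=(\log p)^2$ (for $m=p^k$) to the Chebyshev function $\theta(x)=\sum_{p\le x}\log p$. Splitting off the prime powers,
\[
M(x)=\sum_{p\le x}(\log p)^2+\sum_{k\ge 2}\sum_{p\le x^{1/k}}(\log p)^2,
\]
and applying Riemann--Stieltjes integration by parts to the first sum gives
\[
\sum_{p\le x}(\log p)^2=\int_{2^-}^{x}\log u\,\d\theta(u)=\theta(x)\log x-\int_{2}^{x}\frac{\theta(u)}{u}\,\d u.
\]
Writing $\theta(u)=u+E(u)$ and integrating the main part explicitly produces the leading term $x\log x-x$, while the error becomes $E(x)\log x-\int_2^x E(u)u^{-1}\,\d u$. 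Here I would invoke an explicit conditional bound of Schoenfeld type, $|E(u)|\le \tfrac{1}{8\pi}\sqrt u(\log u)^2$ for $u\ge 599$ under RH; since $\tfrac{1}{8\pi}\approx 0.0398$, the term $E(x)\log x$ is at most $\tfrac{1}{8\pi}\sqrt x(\log x)^3$, and the secondary integral contributes a lower-order $O(\sqrt x(\log x)^2)$ which, for $x\ge 10^5$ (so $\log x\ge 11.5$), is comfortably absorbed. The prime-power sum is nonnegative and dominated by its $k=2$ part, of size $\tfrac12\sqrt x\log x$. Tracking signs — the power sum and the integral push the upper constant above $\tfrac{1}{8\pi}$, while only $E(x)\log x$ and the integral can act negatively — yields the asymmetric constants $-0.047$ and $0.057$.

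For \eqref{eq:tail}, I would express the tail as a Stieltjes integral against $M$ and subtract its smooth main part. Setting $L(u)=u\log u-u$ and $R(u)=M(u)-L(u)$ (so \eqref{eq:M(x)} bounds $R$), and using $\d L(u)=\log u\,\d u$,
\[
V_n(x)=\int_{x^+}^{\infty}\frac{\d M(u)}{u(\log u)^{2n+2}}=\int_{x}^{\infty}\frac{\d u}{u(\log u)^{2n+1}}+\int_{x^+}^{\infty}\frac{\d R(u)}{u(\log u)^{2n+2}}=\frac{1}{2n(\log x)^{2n}}+\Delta_n(x),
\]
so the announced main term appears exactly and $\Delta_n(x)=V_n(x)-\tfrac{1}{2n(\log x)^{2n}}$ is the quantity to bound. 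Integrating the last integral by parts gives
\[
\Delta_n(x)=-\frac{R(x)}{x(\log x)^{2n+2}}+\int_{x}^{\infty}\frac{R(u)}{u^{2}(\log u)^{2n+2}}\Big(1+\frac{2n+2}{\log u}\Big)\d u,
\]
where the boundary term at infinity vanishes since $R(u)\ll\sqrt u(\log u)^3$. As the kernel is positive, I would insert the two one-sided bounds on $R$ from \eqref{eq:M(x)} according to sign, and evaluate $I_s=\int_x^\infty u^{-3/2}(\log u)^{-s}\,\d u$ via the recursion $I_s=\tfrac{2}{\sqrt x(\log x)^s}-2sI_{s+1}$, which for $x\ge 10^5$ gives $I_s=\tfrac{2}{\sqrt x(\log x)^s}(1+O(1/\log x))$. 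The factor $\tfrac{2n+2}{\log u}$, evaluated at $\log x\ge 11.5$, is precisely what produces the linear-in-$n$ coefficients $0.017n$ and $0.020n$, while the remaining pieces assemble into the constants $0.167$ and $0.181$. Concretely, for the upper bound the boundary term enters with constant $0.047$ and the positive integral with $0.057$, and these roles are reversed for the lower bound.

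The main obstacle is the bookkeeping of explicit constants in \eqref{eq:M(x)}: one must combine Schoenfeld's bound, the secondary integral $\int_2^x E(u)u^{-1}\,\d u$, and the prime-power contribution with the correct signs so the upper and lower constants separate as $0.057$ and $-0.047$, and verify that every lower-order term is genuinely dominated for all $x\ge 10^5$ rather than merely asymptotically. Once \eqref{eq:M(x)} holds with clean one-sided constants, the passage to \eqref{eq:tail} is a careful but routine sign-sensitive partial summation, the only delicate point being to keep the positive-kernel integral and the boundary term separate so that $0.047$ and $0.057$ land in the correct places in each of the two inequalities.
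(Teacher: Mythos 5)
Your proposal follows essentially the same route as the paper's proof: Schoenfeld's explicit conditional bound for $\theta(x)$, integration by parts to get $\sum_{p\le x}\log^2p = x\log x - x + O^*\big(c\sqrt{x}\log^3x\big)$ with the prime-power terms added on top, and then the identical sign-sensitive partial-summation identity $V_n(x)=\frac{1}{2n(\log x)^{2n}}-\frac{E(x)}{x(\log x)^{2n+2}}+\int_x^\infty \frac{E(y)(2n+2+\log y)}{y^2(\log y)^{2n+3}}\,\d y$, with the one-sided bounds on $E$ inserted exactly where you place them, so the constants $0.047$ and $0.057$ swap roles between the two inequalities just as you describe. The only bookkeeping the paper does that you elide is starting the Stieltjes integral at $600$ (introducing the explicit constant $c_0=N(600)-\theta(600)\log 600+600>0$, whose positivity is what makes the lower constant the bare $0.047$) and bounding the $k\ge3$ prime powers by $\frac{\log x}{\log 2}N(x^{1/3})$ with a numerically verified $N(y)\le y\log y$ for small $y$ --- minor points that fit under the bookkeeping you already flag.
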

\begin{proof}
We recall an explicit version of the Prime Number Theorem error term under RH (see \cite[Theorem 10]{Sch}): letting $\theta(x)=\sum_{p\le x}\log p$, for all $x\ge 600$ we have
\begin{equation*}
	\theta(x)=x+ O^*\bigg(\frac{\sqrt{x}\log^2x}{8\pi}\bigg).
\end{equation*}
We start by obtaining explicit bounds for $N(x):=\sum_{p\le x}\log^2 p$. Using integration by parts we have, for $x\ge 10^5$,
\begin{align*}
    N(x) = N(600) + \int_{600^+}^{x^+}\log y \,\d\theta(y)=  x\log x -x +c_0 +O^*\left(\frac{\sqrt{x}\log^3 x}{8\pi}\right) + O^*\bigg(\dfrac{1}{8\pi}\int_{600}^x\frac{\log^2y}{\sqrt{y}}\d y\bigg),
\end{align*}
where $c_0:= N(600) - \theta(600)\log600+600 = 62.9734...$ The above integral is bounded by
\begin{align*}
0\leq  \dfrac{1}{8\pi}\int_{600}^x\frac{\log^2y}{\sqrt{y}}\d y &\le \dfrac{\log^2x}{8\pi}\int_0^x \frac{\d y}{\sqrt{y}}\le \frac{\sqrt{x}\log^2x}{4\pi }\leq 0.00692\sqrt{x}\log^3x.
  \end{align*}
This gives
\begin{align}\label{eq:Nsimple2}
    N(x) &= x\log x -x + c_0 +O^*\big(0.04671\sqrt{x}\log^3x\big).
\end{align}
In particular, we obtain for $x\ge 10^5$ that $N(x)\le x\log x$. 
This inequality is also true for $45\leq x<10^5$ by numerical experiment. Now, using these estimates for $N(x)$, we obtain bounds for $M(x)$ as follows:
\begin{align*}
\begin{split}
    \sum_{p\le x}\log^2p \le \sum_{m\le x}\Lambda^2(m) &= \sum_{p\le x}\log^2p + \sum_{p^2\le x}\log^2p + \sum_{k=3}^{\llbracket\frac{\log x}{\log 2}\rrbracket}\sum_{p^k\le x}\log^2p\\
    &\le N(x) + N(\sqrt{x}) + \frac{\log x}{\log 2}N(\sqrt[3]{x})\\
    & \le x\log x -x +c_0 + 0.04671\sqrt{x}\log^3x+ 0.5\sqrt{x}\log x+ 0.4809\sqrt[3]{x}\log^2 x\\
    &\le  x\log x - x + 0.0568\sqrt{x}\log^3x,
\end{split}
\end{align*}
for $x\ge 10^5$. The lower bound follows from \eqref{eq:Nsimple2} and the fact that $c_0>0$. This proves \eqref{eq:M(x)}. Finally, let us prove \eqref{eq:tail}. We write $M(x) = x\log x -x +E(x)$. Then, integration by parts gives us
\begin{align} \label{17_33}
    V_n(x)=\int_{x^+}^{\infty} \dfrac{\d M(y)}{y(\log y)^{2n+2}}=\frac{1}{2n(\log x)^{2n}} - \frac{E(x)}{x(\log x)^{2n+2}} + \int_x^\infty \frac{E(y)\,(2n+2 +\log y)}{y^2(\log y)^{2n+3}}\d y.
\end{align}
Using the upper bound for $E(x)$ obtained in \eqref{eq:M(x)}, we have for $x\geq 10^5$,
\begin{align*} 
\int_x^\infty \frac{E(y)\,(2n+2 +\log y)}{y^2(\log y)^{2n+3}}\d y&\leq 0.057\int_x^\infty \frac{(2n+2 +\log y)}{y^{3/2}(\log y)^{2n}}\d y\\
&\leq\dfrac{0.057}{(\log x)^{2n}}\int_x^\infty \frac{(2n+2)}{y^{3/2}}\d y +\dfrac{0.057}{(\log x)^{2n-1}}\int_x^\infty \frac{1}{y^{3/2}}\d y \\
&\leq\frac{0.020\,n+0.134}{\sqrt{x}(\log x)^{2n-1}}.
\end{align*}
 Similarly, for the same integral we obtain the lower bound $(-0.017\,n-0.110)/\sqrt{x}(\log x)^{2n-1}$. Finally, combining these estimates with \eqref{eq:M(x)} in \eqref{17_33} we conclude \eqref{eq:tail}.
\end{proof}
\vspace{0.2cm}
Table $2$ gives the bounds for $C_n$, applying \eqref{eq:tail} for a specific value $x_n$, in the small cases $1\leq n \leq 10$. For $n\ge 11$, it can be verified that $C_n$ is essentially given by its exponentially-growing first term $\frac{1}{2(\log2)^{2n}}$, up to an error of at most $0.1$.

\begin{table}
\begin{center}
\begin{tabular}{ |c|c|c|c| } 
 \hline
 $n$ & $x_n$ & Lower bound for $C_n$ & Upper bound for $C_n$ \\ 
 \hline \hline
 \textbf{1} & $10^8$ & 1.5651238 & 1.5651260\\
 \hline
 \textbf{2} & $10^7$ & 2.46232872 & 2.46232876\\
 \hline
 \textbf{3} & $5\cdot 10^5$ & 4.72243168 & 4.72243169\\
 \hline
 \textbf{4} & $10^5$ & 9.55058572 & 9.55058573\\
 \hline
 \textbf{5} & $10^5$ & 19.6650658 & 19.6650659\\
 \hline
 \textbf{6} & $10^5$ & 40.7601579 & 40.7601580\\
 \hline
 \textbf{7} & $10^5$ & 84.6986707 & 84.6986708\\
 \hline
 \textbf{8} & $10^5$ & 176.175788 & 176.175789\\
 \hline
 \textbf{9} & $10^5$ & 366.593383 & 366.593384\\
 \hline
 \textbf{10} & $10^5$ & 762.938920 & 762.938921\\
 \hline \hline
\end{tabular}
		\vspace{0.2cm}
\caption{Upper and lower bounds for $C_n$, for $1 \leq n \leq 10$.}
\end{center}
\end{table}

\medskip

\section*{Acknowledgements}
We would like to thank Emanuel Carneiro and Kristian Seip for their insightful comments. We also thank the anonymous referee for the thorough review. A.C. was supported by Grant 275113 of the Research Council of Norway. O.Q-H. was supported by CNPq - Brazil. 

\medskip


\begin{thebibliography}{9999}	

\bibitem{BK} 
E. B. Bogomolny and J. P. Keating, 
\newblock Gutzwillers trace formula and spectral statistics:
beyond the diagonal approximation, 
\newblock Physics Rev. Lett. 77 (1996), 1472--1475.

\bibitem{BS}
A. Bondarenko and K. Seip,
\newblock Extreme values of the Riemann zeta function and its argument,
\newblock Math. Ann. 372 (2018), 999--1015.


\bibitem{CCM}
E. Carneiro, V. Chandee and M. B. Milinovich, 
\newblock Bounding $S(t)$ and $S_1(t)$ on the Riemann hypothesis,
\newblock Math. Ann. 356 (2013), no. 3, 939--968.



\bibitem{CChi} 
E. Carneiro and A. Chirre,
\newblock Bounding $S_n(t)$ on the Riemann hypothesis,
\newblock Math. Proc. Cambridge Philos. Soc. 164 (2018), no. 2, 259--283.


\bibitem{CChiM}  E. Carneiro, A. Chirre and M. B. Milinovich,
\newblock Bandlimited approximations and estimates for the Riemann zeta-function,
\newblock Publ.  Mat.  63 (2019), no.  2, 601--661.


\bibitem{Chan1}
T. H. Chan, 
\newblock On the second moment of $S(T)$ in the theory of the Riemann zeta function, 
\newblock Publ. Math. Debrecen 68 (2006), no. 3-4, 309--329. 


\bibitem{Chan2}
T. H. Chan, 
\newblock Lower order terms of the second moment of $S(t)$,
\newblock Acta Arith. 123 (2006), no. 4, 313--333.



\bibitem{Chi} 
A. Chirre,
\newblock Extreme  values  for $S_n(\sigma, t)$  near  the  critical  line,
\newblock J. Number Theory 200 (2019), 329--352.


\bibitem{ChiM} A. Chirre and K. Mahatab,
\newblock Large oscillations of the argument of the Riemann zeta-function, 
\newblock to appear in Bull. Lond. Math. Soc. Preprint: https://arxiv.org/abs/1904.11051.

\bibitem{ChiM2} A. Chirre and K. Mahatab,
\newblock Large values of the argument of the Riemann zeta-function and its iterates,
\newblock J. Number Theory 225 (2021), 240–259.


\bibitem{F1}
A. Fujii, 
\newblock On the zeros of the Riemann zeta function. 
\newblock Comment. Math. Univ. St. Pauli 51 (2002), no. 1, 1--17.

\bibitem{fujii}
A. Fujii, 
\newblock On the zeros of the Riemann zeta function. II. 
\newblock Comment. Math. Univ. St. Pauli 52 (2003), no. 2, 165--190.



\bibitem{Gh1}
A. Ghosh, 
\newblock On Riemann's zeta-function--Sign changes of $S(T)$, 
\newblock in ``Recent Progress in Analytic Number Theory", Vol. 1, 25--46, Academic Press, New York, 1981.

\bibitem{Gh2}
A. Ghosh, 
\newblock On the Riemann zeta-function--Mean value theorems and the distribution of $|S(t)|$, 
\newblock J. Number Theory 17 (1983), 93--102.



\bibitem{G1} 
D. A. Goldston,
\newblock On the function $S(T)$ in the theory of the Riemann zeta-function, 
\newblock J. Number Theory 27 (1987), no. 2, 149--177.



\bibitem{GM}
D. A. Goldston and H. L. Montgomery, 
\newblock Pair correlation of zeros and primes in short intervals, in {\it Analytic number theory and Diophantine problems} 
\newblock (ed. A. C. Adolphson, J. B. Conrey, A. Ghosh and R. I. Yager), Birkha\"user, Boston (1987),
183–203.

\bibitem{GR} 
I. S. Gradshteyn and I. M. Ryzhik, 
\newblock {\it Table of integrals, series, and products, (translated from Russian)}, 
\newblock seventh
edition, Elsevier/Academic Press, Amsterdam (2007).


\bibitem{L}
J. E. Littlewood,
\newblock On the zeros of the Riemann zeta-function,
\newblock Proc. Camb. Philos. Soc. 22 (1924), 295--318.


\bibitem{L2} 
J. E. Littlewood,
\newblock On the  Riemann zeta-function,
\newblock Proc. London Math. Soc. (2) 24 (1925), no. 3, 175--201.



\bibitem{M}
H. L. Montgomery,
\newblock The pair correlation of zeros of the zeta function, 
\newblock Proc. Symp. Pure Math. 24, Providence (1973),
181–193.



\bibitem{MV}
H. L. Montgomery and R. C. Vaughan,
\newblock {\it Multiplicative Number Theory: I. Classical Theory},
\newblock Cambridge Studies in Advanced Mathematics 97, Cambridge University Press, 2006.



\bibitem{Sch}
L. Schoenfeld, 
\newblock Sharper bounds for the Chebyshev functions $\theta(x)$ and $\psi(x)$. II,
\newblock  Math. Comp. 30 (1976), no. 134, 337–360.


\bibitem{S1}
A. Selberg, 
\newblock On the remainder in the formula for $N(T)$, the number of zeros of $\zeta(s)$ in the strip $0<t<T$,
\newblock Avh. Norske
Vid. -Akad. Oslo I 1944 (1944), no. 1, 1--27.


\bibitem{S2}
A. Selberg, 
\newblock Contributions to the theory of the Riemann zeta-function, 
\newblock Arch. Math. Naturvid. 48 (1946), no. 5, 89–155.


\bibitem{Tit2}
E. C. Titchmarsh,
\newblock On the remainder in the formula for $N(T)$, the number of zeros of $\zeta(s)$ in the strip $0<t<T$, 
\newblock Proc. London Math. Soc. 2 27 (1928), no. 6, 449--458.


\end{thebibliography}
\end{document}